\theoremstyle{definition}
\numberwithin{equation}{section}
\numberwithin{equation}{section}
\newtheorem{theorem}{Theorem}[section]
\newtheorem{corollary}[theorem]{Corollary}
\newtheorem{lemma}[theorem]{Lemma}
\newtheorem{proposition}[theorem]{Proposition}
\theoremstyle{definition}
\newcommand{\HOM}{\text{HOM}}
\newcommand{\B}{\mathbb{B}}
\newcommand{\Q}{\mathbb{Q}}
\newcommand{\Z}{\mathbb{Z}}
\newcommand{\N}{\mathbb{N}}
\newcommand{\K}{\mathbb{K}}
\newcommand{\Ss}{\mathcal{S}}
\newcommand{\ii}{\textit{\textbf{i}}}
\newcommand{\jj}{\textit{\textbf{j}}}
\newcommand{\kk}{\textit{\textbf{k}}}
\newcommand{\Seq}{\text{Seq}}
\newcommand{\Seqd}{\text{Seqd}}
\newcommand{\Pol}{\mathscr{P}}
\newcommand{\Ind}{\text{Ind}}
\newcommand{\Res}{\text{Res}}
\newcommand{\Span}{\text{Span}}
\newcommand{\gdim}{\mathbf{Dim}}
\newcommand{\Mod}{\text{Mod}}
\newcommand{\pMod}{\text{pMod}}
\newcommand{\fMod}{\text{fMod}}
\newcommand{\Ch}{\text{Ch}}
\newcommand{\Sh}{\text{Sh}}
\newcommand{\hd}{\text{hd}}
\newcommand{\soc}{\text{soc}}
\newcommand{\Max}{\text{max}}
\newcommand{\genO}[3] % generator 1
{
\fontsize{9}{9}\selectfont
\xy
(0,5)*{}; (0,-5)*{} **\dir{-};
(4,0)*{\cdots};
(8,5)*{}; (8,-5)*{} **\dir{-};
(12,0)*{\cdots};
(16,5)*{}; (16,-5)*{} **\dir{-};
(0,-7)*{#1}; (8,-7)*{#2}; (16,-7)*{#3};
\endxy
\fontsize{10}{10}\selectfont}
\newcommand{\genX}[3] % generator x
{
\fontsize{9}{9}\selectfont
\xy
(0,5)*{}; (0,-5)*{} **\dir{-};
(4,0)*{\cdots};
(8,5)*{}; (8,-5)*{} **\dir{-};
(8,0)*{\bullet}; (12,0)*{\cdots};
(16,5)*{}; (16,-5)*{} **\dir{-};
(0,-7)*{#1}; (8,-7)*{#2}; (16,-7)*{#3};
\endxy
\fontsize{10}{10}\selectfont}
\newcommand{\genT}[4] % generator tau
{\fontsize{9}{9}\selectfont
\xy
(0,5)*{}; (0,-5)*{} **\dir{-};
(4,0)*{\cdots};
(5,5)*{}; (12,-5)*{} **\dir{-};
(12,5)*{}; (5,-5)*{} **\dir{-};
(13,0)*{\cdots};
(17,5)*{}; (17,-5)*{} **\dir{-};
(0,-7)*{#1}; (5,-7)*{#2}; (12,-7)*{#3}; (17,-7)*{#4};
\endxy
\fontsize{10}{10}\selectfont}
\newcommand{\dcross}[2]
{\fontsize{9}{9}\selectfont
\xy
(0,7)*{}="T1"; (7,7)*{}="T2";
(0,-7)*{}="B1"; (7,-7)*{}="B2";
"T1"; "B1" **\crv{(11, 0)};
"T2"; "B2" **\crv{(-4,0)};
(0,-9)*{#1}; (7,-9)*{#2};
\endxy
\fontsize{10}{10}\selectfont}
\newcommand{\dcrossA}[2]
{\fontsize{9}{9}\selectfont
\xy
(0,5)*{}; (0,-5)*{} **\dir{-};
(6,5)*{}; (6,-5)*{} **\dir{-};
(0,-7)*{#1}; (6,-7)*{#2};
\endxy
\fontsize{10}{10}\selectfont}
\newcommand{\dcrossL}[3]
{ \fontsize{9}{9}\selectfont
\xy
(0,5)*{}; (0,-5)*{} **\dir{-};
(6,5)*{}; (6,-5)*{} **\dir{-};
(0,0)*{\bullet}; (-5,0)*{#1}; (0,-7)*{#2}; (6,-7)*{#3};
\endxy
\fontsize{10}{10}\selectfont}
\newcommand{\dcrossR}[3]
{\fontsize{9}{9}\selectfont
\xy
(0,5)*{}; (0,-5)*{} **\dir{-};
(6,5)*{}; (6,-5)*{} **\dir{-};
(6,0)*{\bullet}; (11,0)*{#1}; (0,-7)*{#2}; (6,-7)*{#3};
\endxy
\fontsize{10}{10}\selectfont}
\newcommand{\LU}[3]
{\fontsize{9}{9}\selectfont
\xy
(0,5)*{}; (8,-5)*{} **\dir{-} \POS?(.25)="x";
(8,5)*{}; (0,-5)*{} **\dir{-};
"x"*{\bullet}; "x"+(-2,2)*{#1}; (0,-7)*{#2}; (8,-7)*{#3}; \endxy
\fontsize{10}{10}\selectfont}
\newcommand{\RD}[3]
{\fontsize{9}{9}\selectfont
\xy
(0,5)*{}; (8,-5)*{} **\dir{-} \POS?(.75)="x";
(8,5)*{}; (0,-5)*{} **\dir{-};
"x"*{\bullet}; "x"+(2,2)*{#1}; (0,-7)*{#2}; (8,-7)*{#3}; \endxy
\fontsize{10}{10}\selectfont}
\newcommand{\RU}[3]
{\fontsize{7}{7}\selectfont
\xy
(0,5)*{}; (8,-5)*{} **\dir{-};
(8,5)*{}; (0,-5)*{} **\dir{-} \POS?(.25)="x";
"x"*{\bullet}; "x"+(2,2)*{#1}; (0,-7)*{#2}; (8,-7)*{#3}; \endxy
\fontsize{10}{10}\selectfont}
\newcommand{\LD}[3]
{\fontsize{9}{9}\selectfont
\xy
(0,5)*{}; (8,-5)*{} **\dir{-};
(8,5)*{}; (0,-5)*{} **\dir{-} \POS?(.75)="x";
"x"*{\bullet}; "x"+(-2,2)*{#1}; (0,-7)*{#2}; (8,-7)*{#3}; \endxy
\fontsize{10}{10}\selectfont}
\newcommand{\BraidL}[3]
{\fontsize{9}{9}\selectfont
\xy
(0,7)*{}="T1"; (6,7)*{}="T2";  (12,7)*{}="T3";
(0,-7)*{}="B1"; (6,-7)*{}="B2"; (12,-7)*{}="B3";
"T1"; "B3" **\crv{(0,0)&(12,0)}; "T3"; "B1" **\crv{(12,0)&(0,0)};
"T2"; "B2" **\crv{(6,6)&(3,4.5)&(0,1.5)&(0,-1.5)&(3,-4.5)&(6,-6)};
(0,-9)*{#1}; (6,-9)*{#2};(12,-9)*{#3};
\endxy
\fontsize{10}{10}\selectfont}
\newcommand{\BraidR}[3]
{\fontsize{9}{9}\selectfont
\xy
(0,7)*{}="T1"; (6,7)*{}="T2";  (12,7)*{}="T3";
(0,-7)*{}="B1"; (6,-7)*{}="B2"; (12,-7)*{}="B3";
"T1"; "B3" **\crv{(0,0)&(12,0)}; "T3"; "B1" **\crv{(12,0)&(0,0)};
"T2"; "B2" **\crv{(6,6)&(8,4.5)&(12,1.5)&(12,-1.5)&(8,-4.5)&(6,-6)};
(0,-9)*{#1}; (6,-9)*{#2};(12,-9)*{#3};
\endxy
\fontsize{10}{10}\selectfont}
\newcommand{\threeDotStrands}[3]
{ \fontsize{9}{9}\selectfont
\xy
(0,7)*{}; (0,-7)*{} **\dir{-};
(5,7)*{}; (5,-7)*{} **\dir{-};
(10,7)*{}; (10,-7)*{} **\dir{-};
(0,0)*{\bullet};(10,0)*{\bullet}; (-2,0)*{c};(20,0)*{-a_{ij}-1-c}; (0,-9)*{#1}; (5,-9)*{#2};(10,-9)*{#3};
\endxy
\fontsize{10}{10}\selectfont}
\newcommand{\Shuffle}
{\fontsize{9}{9}\selectfont
\xy
(-1,15)*{}; (11,15)*{} **\dir{-};
(14,15)*{}; (31,15)*{} **\dir{-};
(-1,0)*{}; (31,0)*{} **\dir{-};
(0,15)*{}; (0,0)*{} **\dir{-};
(5,15)*{};(10,0)*{} **\crv{(5,7.5)&(10,7.5)};
(10,15)*{}; (25,0)*{} **\crv{(10,7.5)& (25,7.5)};
(15,15)*{}; (5,0)*{} **\crv{(15,7.5)&(5,7.5)};
(20,15)*{}; (15,0)*{} **\crv{(20,7.5)&(15,7.5)};
(25,15)*{}; (20,0)*{} **\crv{(25,7.5)&(20,7.5)};
(30,15)*{}; (30,0)*{} **\dir{-};
(-5,7.5)*{u};(-2,15)*{\ii}; (32,15)*{\jj};(-2,0)*{\kk};
\endxy
\fontsize{11}{10}\selectfont}
\newcommand{\dcrossI}[2]
{\fontsize{9}{9}\selectfont
\xy
(0,6)*{}="T1"; (6,6)*{}="T2";
(0,-6)*{}="B1"; (6,-6)*{}="B2";
"T1"; "B1" **\crv{(10, 0)};
"T2"; "B2" **\crv{(-4,0)};
(0,-8)*{#1}; (6,-8)*{#2};
\endxy
\fontsize{10}{10}\selectfont}
\newcommand{\BraidLI}[3]
{\fontsize{9}{9}\selectfont
\xy
(0,6)*{}="T1"; (5,6)*{}="T2";  (10,6)*{}="T3";
(0,-6)*{}="B1"; (5,-6)*{}="B2"; (10,-6)*{}="B3";
"T1"; "B3" **\crv{(0,0)&(10,0)}; "T3"; "B1" **\crv{(10,0)&(0,0)};
"T2"; "B2" **\crv{(5,5)&(3,4)&(0,1.5)&(0,-1.5)&(3,-4)&(5,-5)};
(0,-8)*{#1}; (5,-8)*{#2};(10,-8)*{#3};
\endxy
\fontsize{10}{10}\selectfont}
\newcommand{\BraidRI}[3]
{\fontsize{9}{9}\selectfont
\xy
(0,6)*{}="T1"; (5,6)*{}="T2";  (10,6)*{}="T3";
(0,-6)*{}="B1"; (5,-6)*{}="B2"; (10,-6)*{}="B3";
"T1"; "B3" **\crv{(0,0)&(10,0)}; "T3"; "B1" **\crv{(10,0)&(0,0)};
"T2"; "B2" **\crv{(5,5)&(7,4)&(10,1.5)&(10,-1.5)&(7,-4)&(5,-5)};
(0,-8)*{#1}; (5,-8)*{#2};(10,-8)*{#3};
\endxy
\fontsize{10}{10}\selectfont}
\title[]
{Quiver Hecke algebras for Borcherds-Cartan datum}
\author[]{Bolun Tong}
\address{Harbin Engineering University,
Harbin, China}
\email{tbl\_2019@hrbeu.edu.cn}
\author[]{Wan Wu}
\address{Chengdu University of Information Technology, Chengdu, China}
\email{wuwan1818@163.com}
\keywords{Categorification, quiver Hecke algebra, quantum Borcherds algebra}
\subjclass[2010] {17B37, 81R50}
\begin{document}
\maketitle
\begin{abstract}
We introduce a family of quiver Hecke algebras which give a categorification of quantum Borcherds algebra associated to an arbitrary Borcherds-Cartan datum.
\end{abstract}

\section*{\textbf{Introduction}}

\vskip 2mm
Quiver Hecke algebras, also known as Khovanov-Lauda-Rouquier algebras, were discovered independently by Khovanov-Lauda \cite{KL2009,KL2011} and Rouquier \cite{Rou}, and their representation theory  is shown to be closely related to quantum groups. In Kac-Moody type,  the category of finitely generated graded projective modules over quiver Hecke algebras give a categrification of corresponding quantum groups. Varagnolo-Vasserot \cite{VV2011} and Rouquier \cite{Rou1} proved that, under this connection, the indecomposable projective modules  correspond to the Lusztig's canonical basis, and their irreducible modules correspond to the dual canonical basis.

In this paper, we apply Khovanov-Lauda's  categorification theory   to the  quantum Borcherds algebras, which were introduced by Kang in \cite{Kang}. Given a  Borcherds-Cartan datum consisting of an index set $I$  and a symmetrizable Borcherds-Cartan matrix $A=(a_{ij})_{i,j\in I}$, we construct a family of graded algebras $R(\nu)$ ($\nu\in \N[I]$) associated to it, using the  braid-like planar diagrams, and give a faithful polynomial representation for each $R(\nu)$. When $i$ is a real index in $I^+$, the degenerated algebras $R(ni)$ for $n\in \N$ are exactly the nil-Hecke algebras $NH_n$ as usual. When $i$ is an imaginary index in $I^-$, $R(ni)$ is generated by `dots' $x_1,\dots,x_n$  and `intersections' $\tau_1,\dots,\tau_{n-1}$, with local relations expressed diagrammatically:

 $$\dcrossI{}{}=0\quad\quad \BraidLI{}{}{}  =  \BraidRI{}{}{}\quad\quad \LU{{}}{}{}    =  \RD{{}}{}{}\quad\quad \LD{{}}{}{}     =    \RU{{}}{}{} $$
 We show that $R(ni)$ has a unique graded irreducible module in this case, which is a one-dimensional trivial module denoted by $V(i^n)$. The induction of  two irreducible modules, $\Ind V(i^n)\otimes V(i^m)$, has an irreducible head isomorphic to $V(i^{n+m})$.

  We then form the Grothendieck group $K_0(R)=\bigoplus_{\nu\in\N[I]}K_0(R(\nu))$ of the category of finitely generated graded projective modules. Let $U^-$ be the negative part of the quantum Borcherds algebra associated to the given Borcherds-Cartan datum. A classical framework given in \cite{KL2009,KL2011} leads to an injective homomorphism $\Gamma_{\Q(q)}:U^-\rightarrow\Q(q)\otimes_{\Z[q,q^{-1}]}K_0(R)$. The surjectivity of $\Gamma_{\Q(q)}$ follows from the arguments in \cite[Chapter 5]{K2005} and  \cite[Section 3.2]{KL2009}. But we need to modify some proofs there since the $R((n+m)i)$-module $\Ind  V(i^n)\otimes V(i^m)$ is not irreducible  again when $i\in I^-$. Finally, the map $\Gamma_{\Q(q)}$ induces a $\Z[q,q^{-1}]$-algebra isomorphism $\Gamma:_\mathcal AU^-\rightarrow K_0(R)$, where $_\mathcal AU^-$ is the $\mathcal A$-form of $U^-$.

In \cite{KOP2012}, Kang, Oh and Park gave a categorification of this algebra with the condition  $a_{ii}\neq 0$  in the Borcherds-Cartan matrix $A$. Our construction of the quiver Hecke algebras is different from their and applies to categorifying the  quantum group with an  arbitrary Borcherds-Cartan datum.

\vskip 6mm
\section{\textbf{Preliminaries}}

\subsection{$\Z$-gradings}\

\vskip 2mm

We fix an algebraically closed field $\K$. Let $A$ be a $\Z$-graded algebra over $\K$. For a graded $A$-module $M=\bigoplus_{n\in \Z}M_n$, its graded dimension is defined by
$$\gdim M:=\sum_{n\in\Z}(\text{dim}M_n)q^n,$$
where $q$ is a formal variable. For $m\in \Z$, we denote by $M\{m\}$ the graded $A$-module obtained from $M$ by putting $(M\{m\})_n=M_{n-m}$.  For $f(q)=\sum_{n\in \Z}a_nq^n\in\Z[q,q^{-1}]$, define $M^{f}:=\bigoplus_{n\in\Z}(M\{n\})^{\oplus a_n}$, we have $\gdim M^{f}=f(q)\cdot\gdim M$.
 \vskip 2mm
Given two graded $A$-modules $M$ and $N$, we denote by $\text{Hom}_A(M,N)$  the ${\K}$-vector space of grading-preserving homomorphisms and define the $\Z$-graded vector space $\text{HOM}_A(M,N)$ by
$$\text{HOM}_A(M,N)=\bigoplus_{n\in\Z}\text{Hom}_A(M\{n\},N)=\bigoplus_{n\in\Z}\text{Hom}_A(M,N\{-n\}).$$

\vskip 3mm

\subsection{Negative parts of quantum Borcherds algebras}\

\vskip 2mm
Let $I$ be a finite index set. A Borcherds-Cartan datum $(I,A,\cdot)$ consists of
\begin{itemize}
\item [(a)] an integer-valued matrix $A=(a_{ij})_{i,j\in I}$ satisfying
\begin{itemize}
\item [(i)] $a_{ii}=2,0,-2,-4,\dots$,
\item [(ii)] $a_{ij}\in \Z_{\leq 0}$ for $i\neq j$,
\item [(iii)] there is a diagonal matrix $D=\text{diag}(r_i\in\Z_{>0}\mid i\in I)$ such that $DA$ is symmetric.
\end{itemize}
\item [(b)] a symmetric bilinear form $\nu,\nu'\mapsto \nu\cdot \nu'$ on $\Z[I]$ taking values in $\Z$, such that $$i\cdot j=r_ia_{ij}=r_ja_{ji} \ \ \text{for all} \ i,j\in I.$$
\end{itemize}
 For such a datum, we assign a graph $\Lambda$ with vertices set $I$ and an edge between $i$ and $j$ if $i\cdot j\neq 0$.
\vskip 2mm

We set $I^+=\{i\in I\mid a_{ii}=2\}$ and $I^-=\{i\in I\mid a_{ii}\leq 0\}$. Let $q$ be an indeterminate. For each $i\in I$, let $q_i=q^{r_i}$. For $i\in I^+$ and $n\in \N$, we define
 $$[n]_i=\frac{q_i^n-q_i^{-n}}{q_i-q_i^{-1}}\ \text{and} \ [n]_i!=[n]_i[n-1]_i\cdots [1]_i.$$

The negative part $U^-$ of the quantum Borcherds algebra associated to a Borcherds-Cartan datum $(I,A,\cdot)$ is an associative algebra over $\Q(q)$ with generators $f_i$ $(i\in I)$ and the defining relations
\begin{equation*}
\begin{aligned}
& \sum_{r+s=1-i\cdot j}(-1)^rf_i^{(r)}f_jf_i^{(s)}=0 \quad \text{for} \ i\in I^+,j\in I\ \text{and}\ i\neq j,\\
& f_if_j=f_jf_i \quad \text{for}\ i\in I,j\in I\ \text{and}\ i\cdot j=0.
\end{aligned}
\end{equation*}
Here we denote $f_i^{(n)}=f_i^n/[n]_i!$ for $i\in I^+$ and $n\in \N$. The algebra $U^-$ is $\N[I]$-graded by assigning $\text{deg}(f_i)=i$.

\vskip 2mm
Define a twisted multiplication on $U^-\otimes U^-$ by
$$(x_1\otimes x_2)(y_1\otimes y_2)=q^{-|x_2|\cdot |y_1|}x_1y_1\otimes x_2y_2,$$
for homogeneous $x_1,x_2,y_1,y_2$. By \cite[Proposition 2.4]{SV2001}, we have an algebra homomorphism $\rho:U^-\rightarrow U^-\otimes U^-$ given by $\rho{(f_i)}=f_i\otimes 1+1\otimes f_i$ ($i\in I$) with respect to the above algebra structure on $U^-\otimes U^-$, and a nondegenerate symmetric bilinear form $\{ \ , \ \}: U^-\times U^-\rightarrow \Q(q)$ satisfying the following propoties
\begin{itemize}
\item[(i)] $\{x, y\} =0$ if $|x| \neq |y|$,
\item[(ii)] $\{1,1\} = 1$,
\item[(iii)] $\{f_{i}, f_{i}\} = (1-q_i^2)^{-1}$ for all $i\in I$,
\item[(iv)] $\{x, yz\} = \{\rho(x), y \otimes z\}$  for $x,y,z
\in U^-$.
\end{itemize}

\vskip 2mm

Let $\mathcal A=\Z[q,q^{-1}]$ be the ring of Laurent polynomials. The $\mathcal A$-form $_{\mathcal A} U^-$ is the $\mathcal A$-subalgebra of $U^-$ generated by the divided powers $f_i^{(n)}$ for $i\in I^+,n\in \Z_{\geq 0}$ and $f_i$ for $i\in I^-$.

\vskip 6mm
\section{\textbf{Algebras $R(\nu)$ for Borcherds-Catan datum}}

\vskip 2mm

As in \cite{KL2009}, we construct ${\K}$-algebras $R(\nu)$ ($\nu\in \N[I]$) for Borcherds-Cartan datum by braid-like planar diagrams, in which each strand is labelled by an element of $I$ and can carry dots. These diagrams are invariant when planar isotropy is considered.

\subsection{Definition and polynomial representation}\
\vskip 2mm

Given a Borcherds-Cartan datum $(I,A,\cdot)$. We fix a $\nu=\sum_{i\in I}\nu_ii\in \N[I]$ with $\text{ht}{(\nu)}=n$. Let $\text{Seq}(\nu)$ be the set of all sequences $\ii=i_1i_2\dots i_n$ in $I$ such that $\nu=i_1+i_2\cdots +i_n$. We define the homogeneous generators of $R(\nu)$ by diagrams:
\begin{equation*}
\begin{aligned}
&\ \ 1_{\ii}=\genO{i_1}{i_k}{i_n} \quad \text{for} \ \ii=i_1i_2\dots i_n\in \text{Seq}(\nu) \ \text{with} \ \text{deg}(1_{\ii})=0,\\
&  x_{k,\ii}=\genX{i_1}{i_k}{i_n} \quad \text{for} \ \ii\in \text{Seq}(\nu),1\leq k\leq n \ \text{with} \ \text{deg}(x_{k,\ii})=2r_{i_k},\\
& \tau_{k,\ii}=\genT{i_1}{i_k}{i_{k+1}}{i_n} \quad \text{for} \ \ii\in \text{Seq}(\nu), 1\leq k\leq n-1 \ \text{with} \ \text{deg}(\tau_{k,\ii})=-i_k\cdot i_{k+1}.
\end{aligned}
\end{equation*}
The multiplication $A\cdot B$ of two diagrams $A,B$ is given by concatenation if the bottom sequence of $A$ coincides with the top sequence of $B$, and otherwise is zero.
The local relations of $R(\nu)$ are defined as follows:

\begin{align}
 \dcross{i}{j} \  =  \  \begin{cases} \quad \quad \quad \quad \quad \  0 & \text{ if } i= j, \\
                    \\
                    \quad  \quad \quad \quad \ \ \dcrossA{i}{j} & \text{ if } i\ne j\ \text{and} \ i\cdot j =0, \\
                    \\
                    \   \dcrossL{-a_{ij}}{i}{j} \ + \ \dcrossR{-a_{ji}}{i}{j}  & \text{ if } i\ne j \ \text{and} \ i\cdot j \ne 0,
                  \end{cases}
\end{align}

\begin{equation}
\begin{aligned}
 \LU{{}}{i}{i} \   -  \ \RD{{}}{i}{i}\ =\ \dcrossA{i}{i} \quad\quad\quad \LD{{}}{i}{i}   \ - \ \RU{{}}{i}{i}\ =\ \dcrossA{i}{i}  \quad \text{ if } i \in I^+,
\end{aligned}
\end{equation}

\begin{equation}
\begin{aligned}
 \LU{{}}{i}{i} \    =  \ \RD{{}}{i}{i}  \quad\quad\quad\LD{{}}{i}{i}   \  =   \ \RU{{}}{i}{i}  \quad \text{ if } i \in I^-,
\end{aligned}
\end{equation}

\begin{equation}
\begin{aligned}
 \LU{{}}{i}{j} \   =  \ \RD{{}}{i}{j}  \quad\quad\quad \LD{{}}{i}{j}   \ =  \ \RU{{}}{i}{j}  \quad \text{ if } i \ne j,
\end{aligned}
\end{equation}

\begin{equation}\label{S1}
\quad \quad\quad\quad \quad\BraidL{i}{j}{i} \  -  \ \BraidR{i}{j}{i} \ =  {\sum_{c=0}^{-a_{ij}-1}} \ \threeDotStrands{i}{j}{i} \ \ \text{ if } i\in I^+, i\ne j \ \text{and} \ i\cdot j \ne 0,
 \end{equation}
 \begin{equation}
\BraidL{i}{j}{k} \  = \ \BraidR{i}{j}{k} \quad\quad \text{otherwise}.
\end{equation}

\vskip 2mm

For $\ii,\jj\in \Seq(\nu)$, we set $_{\jj}R(\nu)_{\ii}=1_{\jj}R(\nu)1_{\ii}$, $P_{\ii}=R(\nu)1_{\ii}$ and $_{\jj}P=1_{\jj}R(\nu)$. We have $R(\nu)=\bigoplus_{\ii,\jj} {_{\jj}R(\nu)_{\ii}}$ and $P_{\ii}$ (resp. $_{\jj}P$) is a gr-projective left (resp. right) $R(\nu)$-module.

\vskip 2mm

Choose an orientation for each edge of the graph $\Lambda$. For $\ii\in \Seq(\nu)$, set $$\mathscr{P}_{\ii}={\K}[x_1(\ii),\dots,x_n(\ii),y_1(\ii),\dots,y_n(\ii)]$$ and form the $\K$-vector space
$\mathscr{P}_{\nu}=\bigoplus_{\ii\in \Seq(\nu)}\mathscr{P}_{\ii}$.
Let $S_n=\left<s_1,\dots,s_{n-1}\right>$ be the symmetric group. For each $\omega\in S_n$, define the operators
\begin{equation*}
\begin{aligned}
& \omega:x_a(\ii)\mapsto x_{\omega (a)}(\omega(\ii)),\ y_a(\ii)\mapsto y_{\omega(a)}(\omega(\ii)),\\
& \widetilde\omega:x_a(\ii)\mapsto x_{\omega(a)}(\omega(\ii)),\ y_a(\ii)\mapsto y_a(\omega(\ii)).\\
\end{aligned}
\end{equation*}

We then define an action of $R({\nu})$ on $\Pol_{\nu}$ as follows. $_{\jj}R(\nu)_{\ii}$ acts by $0$ on $\Pol_{\kk}$ if $\ii\neq\kk$. For $f\in \Pol_{\ii}$, $1_{\ii}\cdot f=f$, $x_{k,\ii}\cdot f=x_k{(\ii)}f$ and
\begin{equation}\label{rep}
\tau_{k,\ii}\cdot f=\begin{cases} {s_k}f & \text{if}\ i_k\neq i_{k+1}\ \text{and}\ i_k\cdot i_{k+1}=0, \\
\frac{f-\widetilde s_kf}{x_k{(\ii)}-x_{k+1}(\ii)} & \text{if}\ i_k= i_{k+1}\in I^+,\\
\frac{\widetilde s_kf-s_kf}{y_k{(\ii)}-y_{k+1}(\ii)} & \text{if}\ i_k= i_{k+1}\in I^-,\\
s_kf & \text{if}\ i_k\leftarrow i_{k+1},\\
 (x_k(s_k\ii)^{-a_{ji}}+x_{k+1}(s_k\ii)^{-a_{ij}})s_kf & \text{if}\ i_k\rightarrow i_{k+1}.
\end{cases}
\end{equation}

\vskip 2mm

\begin{proposition}\label{P}{\it
$\Pol_\nu$ is a $R(\nu)$-module with the action defined above.}
\begin{proof}
This can be  obatined immediately by checking the  relations of $R(\nu)$.
\end{proof}
\end{proposition}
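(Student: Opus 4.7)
The plan is to verify, relation by relation, that the operators defined by \eqref{rep} on $\Pol_\nu$ satisfy every defining relation (1)--(6) of $R(\nu)$, by applying both sides to a generic $f\in\Pol_\ii$ and comparing. Throughout I would keep in mind two bookkeeping points: the ``tilde'' operator $\widetilde s_k$ permutes only the $x$-variables (though it still relabels the sequence), whereas $s_k$ permutes both $x$- and $y$-variables; and $\tau_{k,\ii}$ maps $\Pol_\ii$ into $\Pol_{s_k\ii}$, so iterated compositions must be tracked across different summands of $\Pol_\nu$.

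The dot-slide relations (2)--(4) reduce case by case to identities of the form $x_k\partial_k-\partial_k x_{k+1}=1$, or to straightforward substitutions when $i_k\neq i_{k+1}$. For the quadratic relation (1), the case $i_k=i_{k+1}\in I^+$ is the classical Demazure identity $\partial_k^2=0$ with $\partial_k=(1-\widetilde s_k)/(x_k-x_{k+1})$. The genuinely new imaginary case $i_k=i_{k+1}\in I^-$ is handled as follows: writing $f=f(x,y)$, the numerator $\widetilde s_k f-s_k f$ equals $g(y)-g(\sigma_y y)$ where $g(y)=f(\sigma_x x,y)$, hence is divisible by $y_k-y_{k+1}$, so $\tau_{k,\ii}f$ is well defined; and a short computation then shows $\tau_{k,\ii}^2f=0$, because the $x$-swap produced by one application is cancelled by the $x$-swap in the next, leaving a product of two Demazure operators in the $y$-variables that vanishes. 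In the mixed case $i_k\neq i_{k+1}$ with $i_k\cdot i_{k+1}\neq 0$, one uses that on the swapped sequence the edge orientation is reversed, so one of the two successive $\tau$'s is a bare $s_k$ while the other carries the orientation-dependent multiplier; combining them yields exactly $x_k^{-a_{ij}}+x_{k+1}^{-a_{ji}}$, matching the right-hand side of (1).

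The main obstacle is the three-strand braid relation (5). On $\Pol_{(i,j,i)}$ with $i\in I^+$ and $i\cdot j\neq 0$, both $\tau_k\tau_{k+1}\tau_k$ and $\tau_{k+1}\tau_k\tau_{k+1}$ must be expanded by three successive applications of the piecewise formulas in \eqref{rep}, each branch depending on the orientation of the edge between $i$ and $j$ and on the intermediate sequence. The two expansions do not agree term-by-term; after the cancellations are carried out, the residual discrepancy collapses, via the elementary identity $(x^m-y^m)/(x-y)=\sum_{c=0}^{m-1}x^cy^{m-1-c}$, into exactly the prescribed defect $\sum_{c=0}^{-a_{ij}-1}x_1^cx_3^{-a_{ij}-1-c}$. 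The remaining cases of (6) are either the honest braid identity $s_ks_{k+1}s_k=s_{k+1}s_ks_{k+1}$ in $S_n$ when the three colors are pairwise orthogonal, or simplify trivially when an $I^-$ strand intervenes, since the relevant $\tau$'s then reduce to bare transpositions or act in an independent block of $y$-variables. The argument follows \cite{KL2009} closely; the only new ingredient is the extra bookkeeping of the $y$-variables attached to the $I^-$ strands, which is where one must be most careful.
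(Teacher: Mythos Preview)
Your plan is correct and is exactly the approach the paper takes: its proof is the single sentence ``This can be obtained immediately by checking the relations of $R(\nu)$,'' and you have supplied the case-by-case outline that this sentence leaves implicit. The only place I would tighten your sketch is the $(i,j,i)$ braid case with $i\in I^-$ and $i\cdot j\neq 0$, which you fold into ``simplify trivially''; it does go through, but the clean reason is that the imaginary operator factors as $\tau_k=\partial_k^y\,\widetilde s_k$ with the two factors commuting, whence the verification reduces to the separate braid identities $\widetilde s_1\widetilde s_2\widetilde s_1=\widetilde s_2\widetilde s_1\widetilde s_2$ and $s_1\widetilde s_2 s_1=s_2\widetilde s_1 s_2$ together with the usual Demazure braid relation in the $y$-variables.
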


\vskip 3mm
\subsection{Algebras $R(ni)$ for $i\in I^-$}\

\vskip 2mm
In this section, we consider the graph $\Lambda$ with one vertex $i$ and the corresponding algebras $R{(ni)}$ for $n\in\N$. If $i\in I^+$,  $R{(ni)}$ is isomorphic to the nil-Hecke algebra $NH_n$, its algebraic structure and graded representations are well-known (cf. \cite[Example 2.2]{KL2009}). So we consider $i\in I^-$ only. In this case, $R(ni)$ is isomorphic to the ${\K}$-algebra $R^i_n$  with generators $x_1,\dots,x_n$ of degree $2r_i$ and $\tau_1,\dots,\tau_{n-1}$ of degree $-i\cdot i$, subject to the following relations:
\begin{equation*}
\begin{aligned}
& x_kx_t=x_tx_k\  \text{for all} \ 1\leq k,t\leq n,\\
& \tau_k^2=0,\ \tau_k\tau_{k+1}\tau_{k}=\tau_{k+1}\tau_{k}\tau_{k+1},\ \tau_k\tau_t=\tau_t\tau_k \ \text{if}\ |k-t|>1,\\
& x_k\tau_k=\tau_kx_{k+1},\ \tau_kx_k=x_{k+1}\tau_k,\\
& \tau_kx_t=x_t\tau_k  \ \text{if}\ t\neq k,k+1.
\end{aligned}
\end{equation*}
We simply write $R_n$ for $R^i_n$ if there is no ambiguity. Since $\tau_k$ ($1\leq k\leq n-1$) satisfy the braid relations, for each $\omega\in S_n$, we can define $\tau_\omega=\tau_{k_1}\cdots\tau_{k_r}$ if $\omega$ has a reduced expression $\omega=s_{k_1}\cdots s_{k_r}$.
\vskip 2mm

Let $\Pol_n=\K[x_1,\dots,x_n,y_1\dots,y_n]$, and let $\partial_k:\Pol_n\rightarrow \Pol_n$ ($1\leq k\leq n-1$) be the linear operators given by
$$f\mapsto\frac{\widetilde{s}_kf-s_kf}{y_k-y_{k+1}}.$$
Here $\widetilde{s}_k$ acts on $f$ by interchanging $x_k$ and $x_{k+1}$, $s_k$ acts on $f$ by interchanging $x_k$ and $x_{k+1}$ and interchanging $y_k$ and $y_{k+1}$ simultaneously. According to Proposition \ref{P},  $\Pol_n$  is a left $R_n$-module with the action of $x_k$ by multiplication and the action of $\tau_\omega$ by $\partial_\omega$.%, where $\partial_k:\Pol_n\rightarrow \Pol_n$ is the Demazure operator defined by
%$$\partial_k(f)=\frac{f-s_kf}{y_k-y_{k+1}}.$$

\vskip 2mm

\begin{proposition}\label{S2}{\it
The algebra $R_n$ has a basis $\{x_1^{r_1}\cdots x_n^{r_n}\tau_\omega\mid \omega\in S_n,r_1,\dots,r_n\geq 0\}$.}
\begin{proof}
We show that these elements act on $\Pol_n$ independently. Suppose that we have a non-trivial linear combination
$\sum_{\omega;r_1,\dots,r_n}k_{\omega;r_1,\dots,r_n}x_1^{r_1}\cdots x_n^{r_n}\tau_\omega$ acts by zero on $\Pol_n$. Choose a minimal length element $\omega$ such that $k_{\omega;r_1,\dots,r_n}\neq 0$ for some $r_1,\dots,r_n$. Let $\omega_0=s_1(s_1s_2)\cdots(s_1\cdots s_{n-1})$ be the longest element in $S_n$ and write $\omega_0=\omega\omega'$. By applying this linear combination to $\partial_{\omega'}(y_1^{n-1}y_2^{n-2}\cdots y_{n-1})$ , we get
$$ \textstyle{\sum_{r_1,\dots,r_n}}k_{\omega;r_1,\dots,r_n}x_1^{r_1}\cdots x_n^{r_n}=0,$$
which implies $k_{\omega;r_1,\dots,r_n}=0$ for all $r_1,\dots,r_n\geq 0$, a contradiction.
\end{proof}
\end{proposition}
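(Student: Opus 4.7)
The plan is the standard two-step argument for algebras of nil-Hecke flavor: derive spanning from the relations, then prove linear independence by exploiting the action on $\Pol_n$. For spanning, I would use the commutation rules $\tau_k x_t = x_t \tau_k$ (for $t \neq k, k+1$), $x_k \tau_k = \tau_k x_{k+1}$, and $\tau_k x_k = x_{k+1}\tau_k$ to push every $x_j$ past every $\tau_k$, reducing any element of $R_n$ to a $\K$-linear combination of monomials of the shape $x_1^{r_1}\cdots x_n^{r_n}\,\tau_{k_1}\cdots\tau_{k_\ell}$. The braid relation $\tau_k\tau_{k+1}\tau_k = \tau_{k+1}\tau_k\tau_{k+1}$ and the far-commutation of the $\tau$'s then guarantee that any two reduced expressions for $\omega\in S_n$ produce the same element $\tau_\omega$, while the nil relation $\tau_k^2 = 0$ forces every non-reduced $\tau$-word to vanish; hence the elements listed in the statement span $R_n$.

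For linear independence I would apply a hypothetical non-trivial dependence to a carefully chosen polynomial in $\Pol_n$. Suppose $\sum_{\omega,\,r_1,\dots,r_n} k_{\omega;r_1,\dots,r_n}\,x_1^{r_1}\cdots x_n^{r_n}\,\tau_\omega$ acts by zero on $\Pol_n$ and choose $\omega\in S_n$ of minimal length with some $k_{\omega;r_1,\dots,r_n}\ne 0$. Let $\omega_0$ be the longest element of $S_n$ and fix a length-additive factorization $\omega_0 = \omega\,\omega'$. I then evaluate the combination on $\partial_{\omega'}(y_1^{n-1}y_2^{n-2}\cdots y_{n-1})\in\K[y_1,\dots,y_n]\subset\Pol_n$. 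The key observation is that on the $y$-subring $\K[y_1,\dots,y_n]$ the operator $\tau_k$ reduces to the classical divided difference in the $y$'s (since $\widetilde{s}_k$ acts trivially on $y$'s), and classical divided differences obey $\partial_{\omega''}\partial_{\omega'} = \partial_{\omega''\omega'}$ when lengths are additive and vanish otherwise. For any $\omega''\ne\omega$ with non-zero coefficient one has $\ell(\omega'')\ge\ell(\omega)$ by minimality, and one checks that the product $\omega''\omega'$ fails to be length-additive into $\omega_0$: if $\ell(\omega'')>\ell(\omega)$ then $\ell(\omega'')+\ell(\omega')>\ell(\omega_0)$, while if $\ell(\omega'')=\ell(\omega)$ length-additivity would force $\omega''\omega'=\omega_0$ and hence $\omega''=\omega$. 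Either way such terms annihilate the test polynomial, and only the $\omega''=\omega$ summands survive. Using the classical identity $\partial_{\omega_0}(y_1^{n-1}y_2^{n-2}\cdots y_{n-1}) = 1$, the resulting element of $\Pol_n$ is $\sum_{r_1,\dots,r_n} k_{\omega;r_1,\dots,r_n}\,x_1^{r_1}\cdots x_n^{r_n}$, which must be zero; since distinct monomials in the $x$'s are linearly independent in $\Pol_n$, this forces all $k_{\omega;r_1,\dots,r_n} = 0$, contradicting the choice of $\omega$.

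The main obstacle, such as it is, is the bookkeeping verification that the hybrid operator $\tau_k$ on $\Pol_n$ genuinely restricts to the classical divided difference on $\K[y_1,\dots,y_n]$ and that the resulting operators satisfy the length-additive composition law on which the case analysis above depends. Once this is confirmed, no ingredient beyond Proposition~\ref{P} is required, and the argument follows the familiar nil-Hecke template.
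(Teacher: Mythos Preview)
Your proposal is correct and follows essentially the same route as the paper's proof: the linear independence argument via the test vector $\partial_{\omega'}(y_1^{n-1}y_2^{n-2}\cdots y_{n-1})$ and the minimal-length trick is exactly what the authors do, and your extra spanning paragraph simply makes explicit a step the paper leaves implicit. The only addition is your explicit justification (which the paper omits) that on $\K[y_1,\dots,y_n]$ the operators $\tau_k$ reduce to the classical divided differences and hence satisfy the nil-Coxeter vanishing for non-reduced words; this is indeed the one point that needs checking, and your verification is correct.
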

\vskip 2mm

Since there is an anti-automorphism of $R_n$ taking $x_k$ to $x_k$ and $\tau_k$ to $\tau_k$, we see that $\{\tau_\omega x_1^{r_1}\cdots x_n^{r_n}\mid \omega\in S_n,r_1,\dots,r_n\geq 0\}$ is also a basis of $R_n$. We identify the polynomial algebra $P_n=\K[x_1,\dots,x_n]$ with the subalgebra of $R_n$ generated by $x_1,\dots,x_n$. Let $P_n^{S_n}$ be the subalgebra consisting of all symmetric polynomials in $P_n$.

\vskip 2mm

\begin{proposition}\label{S3}
The center of $R_n$ is $P_n^{S_n}$.
\begin{proof}
The proof is an analogue of Theorem 3.3.1 in \cite{K2005}. Let $z=\sum_{\omega\in S_n}f_\omega\tau_\omega$ be a center element. Assume that $\omega\neq 1$ with $f_\omega\neq 0$, then there exists $k\in \{1,\dots,n\}$ such that $\omega (k)\neq k$. But this implies $x_kz-zx_k=\sum_{\omega\in S_n}f_\omega(x_k-x_{\omega(k)})\tau_\omega\neq 0$, a contradiction. Thus $z\in P_n$. Write $z=\sum_{i,j\geq 0}p_{ij}x_1^ix_2^j$ with $p_{ij}\in \K[x_3,\dots,x_n]$. Now $\tau_1z=z\tau_1$ implies $p_{ij}=p_{ji}$ for each $i,j$. Hence $z$ is symmetric in $x_1$ and $x_2$. Similarly, we can show that $z$ is symmetric in $x_k$ and $x_{k+1}$ for all $1\leq k\leq n-1$.
\end{proof}
\end{proposition}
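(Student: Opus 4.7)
The plan is to establish both inclusions $P_n^{S_n}\subseteq Z(R_n)$ and $Z(R_n)\subseteq P_n^{S_n}$. The main tool, which I would prove first by induction on the length of $\omega$, is the clean commutation identity
\[
\tau_\omega x_k \;=\; x_{\omega(k)}\tau_\omega \qquad (\omega\in S_n,\ 1\leq k\leq n),
\]
with immediate consequence $\tau_\omega f = (\omega f)\tau_\omega$ for every $f\in P_n$, where $\omega$ acts on $P_n$ by permuting $x_1,\dots,x_n$. Crucially, unlike the $i\in I^+$ case, the defining relations $\tau_k x_k=x_{k+1}\tau_k$, $\tau_k x_{k+1}=x_k\tau_k$ and $\tau_k x_t=x_t\tau_k$ (for $t\neq k,k+1$) carry no correction terms, so this identity is exact and independent of the chosen reduced expression for $\omega$.

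Granted this, the easy inclusion $P_n^{S_n}\subseteq Z(R_n)$ is immediate: for symmetric $z\in P_n$ one has $\tau_k z = (s_k z)\tau_k = z\tau_k$ for each $k$, and $z$ commutes with every $x_t$ since $P_n$ is commutative, so $z$ commutes with all generators. For the reverse inclusion, I would follow the sketch in the statement. Given $z\in Z(R_n)$, expand it in the basis of Proposition \ref{S2} as $z=\sum_{\omega\in S_n} f_\omega\tau_\omega$ with $f_\omega\in P_n$. Using the identity above,
\[
0 \;=\; x_k z - z x_k \;=\; \sum_{\omega\in S_n} f_\omega\bigl(x_k - x_{\omega(k)}\bigr)\tau_\omega.
\]
Since $P_n$ is an integral domain and $\{x_1^{r_1}\cdots x_n^{r_n}\tau_\omega\}$ is a $\K$-basis, each coefficient $f_\omega(x_k-x_{\omega(k)})$ must vanish, so $f_\omega=0$ whenever some $k$ satisfies $\omega(k)\neq k$. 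Ranging over $k$ kills every $\omega\neq 1$ and gives $z\in P_n$. Then $0=\tau_k z - z\tau_k = (s_k z - z)\tau_k$, and invoking basis independence once more yields $s_k z = z$ for every $1\leq k\leq n-1$, whence $z\in P_n^{S_n}$.

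I do not foresee any genuine obstacle: the argument is purely formal and parallels the classical nil-Hecke proof (cf.\ \cite[Theorem~3.3.1]{K2005}), simplified by the absence of lower-order correction terms in the $\tau$--$x$ relations when $i\in I^-$. The only bookkeeping point is the well-definedness of $\tau_\omega$, which is guaranteed by the braid relations already built into $R_n$ and is the same input that made Proposition \ref{S2} work.
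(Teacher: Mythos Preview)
Your proof is correct and follows essentially the same approach as the paper's: both use the basis of Proposition~\ref{S2} together with the identity $\tau_\omega x_k = x_{\omega(k)}\tau_\omega$ to compute $x_k z - z x_k$ and force $z\in P_n$, then use commutation with $\tau_k$ to get symmetry. Your version is slightly more explicit (you spell out the commutation identity and the easy inclusion $P_n^{S_n}\subseteq Z(R_n)$, which the paper leaves implicit), but the argument is the same.
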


\vskip 2mm

We denote by $L$ the one-dimensional trivial module over $P_n$. Note that $L$ is the unique gr-irreducible $P_n$-module, up to a degree shift. Let $$\overline L=R_n\otimes_{P_n}L=\bigoplus_{\omega\in S_n}\tau_\omega\otimes L,$$ which is a graded left $R_n$-module. Since $x_k\tau_\omega=\tau_\omega x_{\omega^{-1}(k)}$ for any $k$ and $\omega$, we have
$$ x_1 \cdot \overline{L}=x_2\cdot \overline{L}=\cdots=x_n\cdot \overline{L}=0.$$

\vskip 2mm

Fix a nonzero $v\in L$, then $\{ \tau_\omega\otimes v\mid \omega\in S_n\}$ is a basis of $\overline L$.
\vskip 2mm
\begin{lemma}{\it
$\overline{L}$ has a unique (graded) irreducible submodule $V=\Span \{\tau_{\omega_0}\otimes v\}$ with the action of $R_n$ trivially.}
\begin{proof}
Let $W$ be a nonzero submodule of $\overline L$. Assume $m=\sum_{\omega}k_\omega\tau_\omega\otimes v$ is a nonzero element of $W$. Choose a minimal length element $\omega$ such that $k_{\omega}\neq 0$ and write $\omega_0=\omega'\omega$, we have $\tau_{\omega'}m=k_\omega\tau_{\omega_0}\otimes v\in W$. This shows that each nonzero submodule contains $V$. Moreover, $V$ itself is a (graded) $R_n$-module.
\end{proof}
\end{lemma}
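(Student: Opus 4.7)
My plan is to establish three things in sequence: that $V=\Span\{\tau_{\omega_0}\otimes v\}$ is actually a submodule, that it is (gr-)irreducible, and that every nonzero submodule of $\overline L$ contains it; the last gives both uniqueness and the fact that $V$ is the socle.

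First I would check that $V$ is a submodule on which $R_n$ acts trivially. All $x_k$ already kill every $\tau_\omega\otimes v$ via $x_k\tau_\omega=\tau_\omega x_{\omega^{-1}(k)}$ and $x_\ell\cdot v=0$, as already noted above the lemma. For the action of a generator $\tau_k$ on $\tau_{\omega_0}\otimes v$, since $\omega_0$ is the longest element we have $\ell(s_k\omega_0)=\ell(\omega_0)-1$, so $\omega_0=s_k\cdot(s_k\omega_0)$ is reduced and hence $\tau_{\omega_0}=\tau_k\tau_{s_k\omega_0}$. Consequently $\tau_k\tau_{\omega_0}=\tau_k^2\tau_{s_k\omega_0}=0$, so $V$ is a one-dimensional trivial submodule, in particular graded irreducible.

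For the uniqueness part I would take an arbitrary nonzero (graded) submodule $W\subseteq\overline L$ and a nonzero element $m=\sum_{\sigma\in S_n}k_\sigma\tau_\sigma\otimes v$ of $W$. Choosing $\omega$ of minimal length with $k_\omega\neq 0$ and setting $\omega'=\omega_0\omega^{-1}$, the standard fact $\ell(\omega_0\omega^{-1})=\ell(\omega_0)-\ell(\omega)$ for the longest element yields a reduced factorization $\omega_0=\omega'\omega$, so $\tau_{\omega'}\tau_\omega=\tau_{\omega_0}$. The goal is to show that $\tau_{\omega'}m=k_\omega\tau_{\omega_0}\otimes v$, which is a nonzero element of $V$, forcing $V\subseteq W$.

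The key technical step, and the one I expect to be the main obstacle, is the nil-type identity
\begin{equation*}
\tau_u\tau_v=0\quad\text{in}\ R_n\ \text{whenever}\ \ell(uv)<\ell(u)+\ell(v).
\end{equation*}
This should follow by induction from $\tau_k^2=0$ and the braid/commutation relations: if the lengths fail to add then there exists a simple reflection $s_k$ with $u=u's_k$, $v=s_kv'$, $\ell(u)=\ell(u')+1$, $\ell(v)=1+\ell(v')$, giving $\tau_u\tau_v=\tau_{u'}\tau_k^2\tau_{v'}=0$. Granted this, any $\sigma\neq\omega$ with $k_\sigma\neq 0$ satisfies $\ell(\sigma)\geq\ell(\omega)$ by minimality, so $\ell(\omega')+\ell(\sigma)\geq\ell(\omega_0)$; if the product were reduced, $\omega'\sigma$ would have length $\ell(\omega_0)$ and therefore equal $\omega_0=\omega'\omega$, forcing $\sigma=\omega$, a contradiction. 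Hence $\tau_{\omega'}\tau_\sigma=0$ for every $\sigma\neq\omega$, only the $\omega$-term of $\tau_{\omega'}m$ survives, and the argument closes. The grading plays no role in this argument, so the uniqueness even holds in the ungraded sense, which is stronger than what the lemma claims.
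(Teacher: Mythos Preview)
Your approach is exactly the paper's: pick a nonzero $m\in W$, take $\omega$ of minimal length appearing in $m$, factor $\omega_0=\omega'\omega$ reducedly, and hit $m$ with $\tau_{\omega'}$ to land in $V$. The paper simply asserts $\tau_{\omega'}m=k_\omega\tau_{\omega_0}\otimes v$ and that $V$ is a submodule; you have supplied the missing details in both places.

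One small correction to your justification of the nil-Coxeter identity $\tau_u\tau_v=0$ for $\ell(uv)<\ell(u)+\ell(v)$: the claim that one can always peel off a common simple reflection at the interface (i.e.\ find $s_k$ with $u=u's_k$ and $v=s_kv'$ both reduced) is false in general---take $u=v=s_1s_2$ in $S_3$, where the right descent set of $u$ is $\{s_2\}$ and the left descent set of $v$ is $\{s_1\}$. The correct argument uses Matsumoto's theorem: any non-reduced word in the $s_k$ can be brought by braid moves alone to one containing a square $s_ks_k$, and since the $\tau_k$ satisfy the braid relations this forces $\tau_u\tau_v=0$. With that fix your argument is complete and matches the paper's line.
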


\vskip 2mm
\begin{lemma}\label{M}{\it
$\overline{L}$ has a unique (graded) maximal submodule $M=\Span \{\tau_{\omega}\otimes v\mid \omega\neq 1\}$. In particular, $\overline{L}/M\simeq V$ as  $R_n$-modules.}
\begin{proof}
It's obvious that $M$ is a (graded) maximal submodule of $\overline{L}$. For any nonzero submodule $W$ of $\overline L$, if $W$ contains an element $z$ of the form
$$z=1\otimes v+\sum_{\omega\in S_n,\omega\neq 1}k_{\omega}\tau_\omega\otimes v,$$
then we choose a minimal length element $\omega\neq 1$ with $k_{\omega}\neq 0$  and obtain $z_1=z-k_\omega\tau_\omega z\in W$. Note $z_1$ is of the form
$$z_1=1\otimes v+\sum_{l(\omega')\geq l(\omega), \omega'\neq \omega}c_{\omega'}\tau_{\omega'}\otimes v$$
for some $c_{\omega'}\in \K$.
By repeating this process, one can deduce that $1\otimes v\in W$. Therefore, if a submodule $W\neq \overline{L}$, then $W\subseteq M$. The lemma is proved.
\end{proof}
\end{lemma}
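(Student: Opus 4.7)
The plan is to establish the lemma in three stages: first verify that $M$ is a graded $R_n$-submodule, then argue that every proper submodule of $\overline{L}$ sits inside $M$ (so $M$ is both maximal and unique), and finally identify the quotient $\overline{L}/M$ with $V$ as $R_n$-modules.

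For the first stage I would check that the generators $x_k$ and $\tau_k$ preserve $M$. Since $x_k\cdot\overline{L}=0$, the $x_k$ are automatic. For $\tau_k\cdot(\tau_\omega\otimes v)$ with $\omega\neq 1$, I would use the fact, which follows from the relations $\tau_k^2=0$ together with the braid and commuting relations, that $\tau_k\tau_\omega$ equals $\tau_{s_k\omega}$ when $\ell(s_k\omega)=\ell(\omega)+1$ and equals $0$ when $\ell(s_k\omega)=\ell(\omega)-1$. In the first case $s_k\omega\neq 1$ because $\ell(s_k\omega)\geq 2$, so the image lies in $M$; in the second the image is zero. Hence $M$ is a submodule, and since $\overline{L}/M$ is one-dimensional it is automatically simple, so $M$ is maximal.

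For the uniqueness of the maximal submodule, I would show that any submodule $W$ not contained in $M$ must equal $\overline{L}$. Pick $z\in W\setminus M$ and, after rescaling, assume $z=1\otimes v+\sum_{\omega\neq 1}k_\omega\,\tau_\omega\otimes v$ in the standard basis. Choose a minimal-length $\omega\neq 1$ with $k_\omega\neq 0$ and set $z_1=z-k_\omega\tau_\omega z\in W$. The key computation is that left multiplication by $\tau_\omega$ sends $\tau_\mu\otimes v$ to either zero or to $\tau_{\omega\mu}\otimes v$ with $\ell(\omega\mu)=\ell(\omega)+\ell(\mu)$; consequently $\tau_\omega z$ contributes no term at $1\otimes v$ and no term at any $\tau_\mu\otimes v$ with $\ell(\mu)<\ell(\omega)$. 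The summand $-k_\omega\tau_\omega\cdot(1\otimes v)$ cancels the offending $k_\omega\tau_\omega\otimes v$ in $z$, so $z_1$ still has coefficient $1$ at $1\otimes v$, coefficient $0$ at $\tau_\omega\otimes v$, and unchanged (zero) coefficients at every strictly shorter $\omega'$. Iterating, the minimal length of a surviving term $\tau_{\omega'}\otimes v$ with $\omega'\neq 1$ strictly increases until all such terms vanish and $1\otimes v\in W$. Then $\tau_\mu\cdot(1\otimes v)=\tau_\mu\otimes v\in W$ for every $\mu$, so $W=\overline{L}$.

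For the isomorphism $\overline{L}/M\simeq V$, observe that the quotient is one-dimensional, spanned by the image of $1\otimes v$, and the action of every $x_k$ is zero (since $x_k\overline{L}=0$) and the action of every $\tau_k$ is zero (since $\tau_k\cdot(1\otimes v)=\tau_k\otimes v\in M$). Thus $\overline{L}/M$ is the trivial one-dimensional $R_n$-module, which is exactly the structure on $V$ described in the previous lemma (the two modules sit in different degrees, so the isomorphism is as ungraded $R_n$-modules, up to a degree shift). The step I expect to be most delicate is the cancellation argument in the uniqueness part: one must exploit the nil-Coxeter structure of the $\tau$-subalgebra so that $\tau_\omega\tau_\mu$ either vanishes or lives strictly above $\tau_\omega$ in length, which simultaneously preserves the coefficient at $1\otimes v$ and guarantees the induction on the minimal surviving length terminates.
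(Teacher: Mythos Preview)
Your proposal is correct and follows essentially the same approach as the paper. The core uniqueness argument---picking $z\in W\setminus M$, choosing a minimal-length $\omega\neq 1$ with nonzero coefficient, and replacing $z$ by $z-k_\omega\tau_\omega z$ to kill that term while preserving the coefficient at $1\otimes v$---is exactly the paper's argument; you have simply spelled out more carefully why $M$ is a submodule and why the quotient is trivial, steps the paper dismisses as obvious. One small imprecision: after a single cancellation step the minimal surviving length need not strictly increase (there may be several $\omega$ of the same minimal length), but it does after finitely many such steps, and the paper's formulation $\ell(\omega')\ge\ell(\omega),\ \omega'\neq\omega$ reflects exactly this.
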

\vskip 2mm
\begin{theorem}{\it
$V$ is the unique gr-irreducible module over $R_n$, up to a degree shift.}
\begin{proof}
Let $N$ be a gr-irreducible $R_n$-module, then $N$ contains a $P_n$-submodule isomorphic to $L\{r\}$ for some $r\in \Z$. Since we have the graded isomorphism
$$\text{HOM}_{R_n}(\overline{L},N)\simeq\text{HOM}_{P_n}(L,\text{HOM}_{R_n}(R_n,N))\simeq\text{HOM}_{P_n}(L,N)\neq 0,$$
there exist a surjective graded homomorphism $\overline{L}\twoheadrightarrow N\{-r\}$ by the irreducibility of $N$. By Lemma \ref{M}, we have $N\{-r\}\simeq \overline{L}/M\simeq V$.
\end{proof}
\end{theorem}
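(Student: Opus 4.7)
The plan is to use the induction-restriction adjunction between $R_n$ and its subalgebra $P_n$, together with Lemma \ref{M}. Concretely, given a gr-irreducible $R_n$-module $N$, I would first produce a nonzero map from a shift of $\overline{L}$ onto $N$, and then identify $N$ with $V$ up to shift by invoking the uniqueness of the maximal submodule of $\overline{L}$.

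The first step is to show that $N$ contains a $P_n$-submodule isomorphic to $L\{r\}$ for some $r\in\Z$. Because each $x_k$ has strictly positive degree $2r_i$ (recall $r_i\in\Z_{>0}$), I expect the standard graded argument: the symmetric polynomials $e_1(x),\dots,e_n(x)$ lie in the center $P_n^{S_n}$ by Proposition \ref{S3}, hence act as homogeneous graded endomorphisms of $N$; since they have positive degree, graded Schur forces them to act by zero on the gr-irreducible $N$. Consequently $N$ is a module over the finite-dimensional quotient $R_n/R_n(P_n^{S_n})_+$, and the action of each $x_k$ on $N$ is nilpotent. The top nonzero graded piece of $N$ is then annihilated by all the $x_k$'s, which yields a copy of $L\{r\}$ inside $\Res_{P_n}^{R_n} N$ for some $r$.

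Second, I would apply the tensor-hom adjunction exactly as the statement suggests:
\begin{equation*}
\HOM_{R_n}(\overline{L},N)\;\cong\;\HOM_{P_n}(L,\Res_{P_n}^{R_n}N)\;\neq\;0.
\end{equation*}
A nonzero homogeneous element of $\HOM_{R_n}(\overline{L},N)$ of degree $r$ gives a nonzero graded $R_n$-map $\overline{L}\{r\}\to N$, and the gr-irreducibility of $N$ forces this map to be surjective. Finally, by Lemma \ref{M}, $\overline{L}\{r\}$ has a unique graded maximal submodule $M\{r\}$, so the kernel of the surjection must equal $M\{r\}$, and hence $N\cong(\overline{L}/M)\{r\}\cong V\{r\}$.

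The only subtle point, and the step that carries the real content, is the first one: guaranteeing that every gr-irreducible $R_n$-module admits a trivial $P_n$-subrepresentation. Once this is in place, the rest of the argument is a mechanical application of Frobenius reciprocity plus Lemma \ref{M}.
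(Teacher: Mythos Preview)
Your proposal is correct and follows essentially the same route as the paper's proof: produce a copy of $L\{r\}$ inside $\Res_{P_n}N$, apply the tensor--hom adjunction to get a nonzero (hence surjective) graded map from $\overline{L}$ onto a shift of $N$, and then invoke Lemma~\ref{M}. The only difference is that you spell out the first step via the center and graded Schur, whereas the paper simply asserts the existence of the $P_n$-submodule $L\{r\}$; your added justification is sound and in fact clarifies why $N$ is finite-dimensional, which is needed to speak of a top graded piece.
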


\vskip 2mm

We shall denote by $V(i^n)$ the unique gr-irreducible $R(ni)$-module for $i\in I$, which is a one-dimensional trivial module for $i\in I^-$ by arguments above. Recall that, for $i\in I^+$, $V(i^n)$ is isomorphic to $\Ind L= NH_n\otimes_{P_n}L$, up to a degree shift.

\vskip 6mm

\section{\textbf{Basic properties and representation theory of $R(\nu)$}}

\vskip 2mm

This section follows \cite{KL2009} and \cite{KL2011} closely. We list our main results without proof as they can be proved step by step according to \cite{KL2009} and \cite{KL2011} with  appropriate deformations.

\subsection{Basis and center}\
\vskip 2mm

For $\ii,\jj\in \Seq(\nu)$, set $_\jj\Ss_\ii=\{\omega\in S_n\mid \omega(\ii)=\jj\}$. We fix a reduced expression  for each $\omega\in _\jj\Ss_\ii$, which determines a unique element $\widehat\omega_\ii\in _{\jj}R(\nu)_{\ii}$, and set
$$_{\jj}\mathcal B_{\ii}=\{\widehat\omega_\ii\cdot x_{1,\ii}^{r_1}\cdots x_{n,\ii}^{r_n}\mid \omega\in _\jj\Ss_\ii , r_1,\dots,r_n\in \N\}.$$

 \vskip 2mm
\begin{proposition}\label{basis}
{\it $_{\jj}\mathcal B_{\ii}$ is a basis of $_{\jj}R(\nu)_{\ii}$. Moreover, $\Pol_\nu$ is a faithful $R(\nu)$-module with the actions given in (\ref{rep}).}
\begin{proof}
This proposition follows from Proposition \ref{S2} and the standard arguments in \cite[Theorem 2.5]{KL2009}.
\end{proof}
\end{proposition}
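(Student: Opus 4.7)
The plan is to follow the strategy of \cite[Theorem 2.5]{KL2009}, establishing spanning and linear independence separately, with the linear independence derived from faithfulness of the polynomial representation $\Pol_\nu$; the two conclusions of the proposition will therefore be proved simultaneously.

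For spanning, I would argue that any diagram representing an element of $_{\jj}R(\nu)_{\ii}$ can be rewritten, modulo the local relations (2.1)--(2.6), as a $\K$-linear combination of elements of $_{\jj}\mathcal B_{\ii}$. First, use relations (2.2)--(2.4) to push every dot to the top of the diagram: each such move produces at worst a correction term with one fewer crossing, so by induction on the number of crossings, all dots may be placed above all crossings. Next, using relations (2.5)--(2.6) together with the Reidemeister-II type relation (2.1), reduce the crossing part of the diagram to one of the fixed reduced expressions $\widehat\omega_\ii$ chosen for $\omega\in {_\jj}\Ss_\ii$: any two reduced expressions of the same $\omega$ differ by braid moves, which by (2.5)--(2.6) introduce only diagrams with strictly fewer crossings, and any non-reduced expression can be shortened using (2.1) again up to lower-order terms. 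Induction on length then shows every $1_{\jj} r 1_{\ii}$ lies in $\Span(_{\jj}\mathcal B_{\ii})$.

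For linear independence and faithfulness, I would test the action defined in (\ref{rep}) against carefully chosen polynomials. Suppose
\[
\sum_{\omega\in {_\jj}\Ss_\ii}\sum_{r_1,\dots,r_n\geq 0} c_{\omega;r_1,\dots,r_n}\,\widehat\omega_\ii\cdot x_{1,\ii}^{r_1}\cdots x_{n,\ii}^{r_n}
\]
acts as zero on $\Pol_\nu$. Pick an element $\omega$ of minimal length among those with some nonzero coefficient and write the longest element of $S_n$ as $\omega_0=\omega\omega'$ with $\ell(\omega_0)=\ell(\omega)+\ell(\omega')$. Using the explicit formulas in (\ref{rep}), each $\widehat\omega_\ii$ acts on $\Pol_\ii$ by a divided-difference-like operator (the Demazure operator for $I^+$ colors, its $y$-variable analogue for $I^-$ colors as in Proposition \ref{S2}, and a polynomial-twisted permutation for mixed colors). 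Applying the vanishing linear combination to a ``separating'' polynomial of the form $\widehat{\omega'}_\ii\cdot g$, where $g$ is a monomial in the $y$-variables attached to $I^-$-strands and in the $x$-variables attached to $I^+$-strands mimicking $y_1^{n-1}\cdots y_{n-1}$ in the proof of Proposition \ref{S2}, kills every term with a strictly longer $\omega'$ and leaves a nonzero polynomial in $x_1(\ii),\dots,x_n(\ii)$. The coefficients of this polynomial read off the $c_{\omega;r_1,\dots,r_n}$, forcing them to vanish, a contradiction.

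The main obstacle is the bookkeeping in the separating-polynomial step when the sequence $\ii$ contains both real and imaginary indices as well as differently-colored neighbors, since the relevant operators are of three different shapes in (\ref{rep}). I would handle this by working one color block at a time: for each $I^-$-block use the argument of Proposition \ref{S2} with the $y$-variables, for each $I^+$-block use the classical nil-Hecke argument with the $x$-variables, and for mixed adjacent colors use the fact that $\tau_{k,\ii}$ acts as an invertible polynomial times $s_k$, so its contribution to leading coefficients is transparent. Combining these pieces, the span property and the independence of the action of $_{\jj}\mathcal B_{\ii}$ on $\Pol_\nu$ are established simultaneously, which yields both that $_{\jj}\mathcal B_{\ii}$ is a basis and that $\Pol_\nu$ is a faithful $R(\nu)$-module.
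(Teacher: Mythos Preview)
Your proposal is correct and follows exactly the route the paper intends: the paper's proof merely points to Proposition~\ref{S2} together with the standard argument of \cite[Theorem~2.5]{KL2009}, and you have faithfully unpacked that argument (spanning by diagram reduction, linear independence via the polynomial representation with a separating test polynomial, the $I^-$-block case handled as in Proposition~\ref{S2}). Two small slips to clean up, neither structural: in the basis $_{\jj}\mathcal B_{\ii}$ the dots sit \emph{below} the crossings, so you should push dots to the bottom rather than the top; and for a mixed-color crossing with $i_k\to i_{k+1}$ the polynomial factor $x_k^{-a_{ji}}+x_{k+1}^{-a_{ij}}$ is not invertible in the polynomial ring, so say ``nonzero'' and argue via leading terms as in \cite{KL2009}.
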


\vskip 2mm
 Assume $\nu=\nu_1i_1+\cdots+\nu_ti_t$ such that $i_1,\dots,i_t$ are all distinct and $\nu_k>0$. By Proposition \ref{S3} and Theorem 2.9 in \cite{KL2009}, we describe the center $Z(R{(\nu)})$ of $R{(\nu)}$ as follows.
\vskip 2mm

\begin{proposition}\label{Cen}
{\it $Z(R(\nu))\simeq\bigotimes^t_{k=1}\K[z_1,\dots,z_{\nu_k}]^{S_{\nu_k}}$.
}\end{proposition}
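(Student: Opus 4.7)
The plan is to combine Proposition \ref{S3} with the argument of \cite[Theorem 2.9]{KL2009}, splitting the proof into three parts: constructing the central subalgebra, reducing an arbitrary central element to a pure polynomial in the dots, and identifying the symmetry constraints imposed by the crossings.

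First, I would exhibit the right-hand side inside $Z(R(\nu))$. For each color $c=i_k$ (with $1\le k\le t$) and each $1\le l\le\nu_k$, define the global symmetric polynomial $e_{c,l}:=\sum_{\ii\in\Seq(\nu)}e_l\bigl(\{x_{j,\ii}:i_j=c\}\bigr)\cdot 1_{\ii}\in R(\nu)$, where $e_l$ denotes the $l$-th elementary symmetric polynomial. I would check that $e_{c,l}$ commutes with every generator. Commutation with the idempotents $1_{\jj}$ and the dots $x_{j,\jj}$ is immediate. Commutation with a crossing $\tau_{j,\jj}$ splits into two cases: if $i_j=i_{j+1}$, the dot-crossing relations for the real case ($I^+$) or the imaginary case ($I^-$) centralize symmetric polynomials in $x_j,x_{j+1}$; if $i_j\ne i_{j+1}$, the free slide of dots along their own strand through the crossing gives $\tau_{j,\jj}\cdot p_{\jj}=p_{s_j\jj}\cdot\tau_{j,\jj}$, where $p_{\jj}$ denotes the local color-$c$ contribution. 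Algebraic independence of the $e_{c,l}$'s (checked on the faithful polynomial representation of Proposition \ref{basis}) then yields an injection $\bigotimes_{k=1}^{t}\K[z_1,\ldots,z_{\nu_k}]^{S_{\nu_k}}\hookrightarrow Z(R(\nu))$.

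Second, I would show surjectivity by proving that every central element is a pure polynomial in the dots. Let $z\in Z(R(\nu))$. Commutation with all idempotents forces $z=\sum_{\ii}z_{\ii}$ with $z_{\ii}=1_{\ii}z 1_{\ii}$, which by Proposition \ref{basis} expands as $z_{\ii}=\sum_{\omega\in{_\ii\Ss_\ii}}f_{\omega,\ii}(x)\,\widehat{\omega}_{\ii}$, where ${_\ii\Ss_\ii}$ is the stabilizer of $\ii$. Following the argument of Proposition \ref{S3}, I would choose a nonidentity $\omega$ of minimal length with $f_{\omega,\ii}\ne 0$, pick $m$ with $\omega(m)\ne m$, and compute $[x_m,z_{\ii}]=0$. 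The commutator $x_m\widehat{\omega}_{\ii}-\widehat{\omega}_{\ii}x_{\omega^{-1}(m)}$ equals a sum of terms supported on strictly shorter permutations (coming from the dot-through-crossing relations and the quadratic crossing relation), so the length-minimal component yields $f_{\omega,\ii}=0$, a contradiction. Hence $z_{\ii}\in\K[x_{1,\ii},\dots,x_{n,\ii}]\cdot 1_{\ii}$. Third, writing $z=\sum_{\ii}p_{\ii}(x)\,1_{\ii}$, I impose commutation with each $\tau_{j,\ii}$. When $i_j=i_{j+1}$, the explicit dot-crossing relations force $p_{\ii}$ to be symmetric in $x_{j,\ii},x_{j+1,\ii}$; for $I^+$ this is the classical fact that the kernel of the Demazure--Lusztig operator is the symmetric part, and for $I^-$ it follows similarly from the $\partial$-type relation. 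When $i_j\ne i_{j+1}$, the relation gives $p_{s_j\ii}=s_j(p_{\ii})$, identifying the polynomials attached to permuted sequences. Iterating over all adjacent transpositions shows that $\{p_{\ii}\}$ is governed by a single polynomial symmetric in the $x$-variables of each color, which is exactly an element of $\bigotimes_{k=1}^{t}\K[z_1,\ldots,z_{\nu_k}]^{S_{\nu_k}}$, giving the reverse inclusion.

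The main technical obstacle is Step 2: when dragging $x_m$ past $\widehat{\omega}_{\ii}$ the correction terms produced by the crossing relations mix colors and can be delicate to track uniformly across real and imaginary strands. A clean alternative is to invoke the faithful polynomial representation of Proposition \ref{basis}: any central $z$ becomes an $R(\nu)$-endomorphism of $\Pol_{\nu}$, and commutation with all dot operators on $\Pol_{\ii}$ (whose centralizer in $\End_{\K}(\Pol_{\ii})$ consists of multiplications by polynomials) confines $z_{\ii}$ to a multiplication operator, bypassing the explicit commutator bookkeeping entirely. The symmetry step then reduces to decoding the $\partial_j$-type formulas from Proposition \ref{P} on these multiplication operators, which is essentially mechanical.
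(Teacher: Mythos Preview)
Your overall plan matches the paper's exactly: the paper does not give an independent argument but simply records that the result follows from Proposition~\ref{S3} together with \cite[Theorem~2.9]{KL2009}, and your three steps are precisely the skeleton of that Khovanov--Lauda argument, with Proposition~\ref{S3} inserted at the appropriate place to handle the new imaginary case.

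Two technical points in your execution need correction. In Step~2, choosing a \emph{minimal}-length nonidentity $\omega$ with $f_{\omega,\ii}\neq 0$ does not work once real colors are present. When you expand $[x_m,z_{\ii}]=0$ in the basis, the coefficient of $\widehat{\omega}_{\ii}$ is $f_{\omega,\ii}(x_m-x_{\omega(m)})$ \emph{plus} the dot-sliding correction terms produced by every strictly longer $\omega'$ with $f_{\omega',\ii}\neq 0$; these need not vanish, so you cannot conclude $f_{\omega,\ii}=0$. (In Proposition~\ref{S3} this issue does not arise because the imaginary relations $x_k\tau_k=\tau_k x_{k+1}$ are exact, which is why no length argument appears there.) The fix is simply to argue by \emph{maximal} length instead: if $\omega$ is maximal with $f_{\omega,\ii}\neq 0$, no longer element contributes corrections and the $\widehat{\omega}_{\ii}$-coefficient is exactly $f_{\omega,\ii}(x_m-x_{\omega(m)})$. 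Note also that the expression you wrote, $x_m\widehat{\omega}_{\ii}-\widehat{\omega}_{\ii}x_{\omega^{-1}(m)}$, is not what appears in $[x_m,z_{\ii}]$; the relevant quantity is $x_m\widehat{\omega}_{\ii}-\widehat{\omega}_{\ii}x_m$.

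Your proposed alternative via the faithful polynomial representation also has a gap: the centralizer of the multiplication operators $x_1(\ii),\dots,x_n(\ii)$ inside $\End_{\K}(\Pol_{\ii})$ is \emph{not} just multiplication by polynomials in the $x$'s, because $\Pol_{\ii}=\K[x_1,\dots,x_n,y_1,\dots,y_n]$ and any $\K[x]$-linear endomorphism moving the $y$-variables arbitrarily still commutes with all the $x_k$. So this route does not bypass the basis computation; one still has to use the explicit description of the image of $_{\ii}R(\nu)_{\ii}$, which is the direct argument again.
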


\vskip 2mm
$R(\nu)$ is a free $Z(R(\nu))$-module of rank $(n!)^2$. It is also a graded free $Z(R(\nu))$-module of finite rank. We have
 $$\gdim Z(R(\nu))=\prod ^t_{k=1}\left(\prod^{\nu_k}_{c=1}\frac{1}{1-q_{i_k}^{2c}}\right),$$
 and $\gdim R(\nu)\in \Z[q,q^{-1}]\cdot \gdim Z(R(\nu))$.
\vskip 3mm

\subsection{Grothendieck groups and bilinear pairings}\
\vskip 2mm

Let $R(\nu)$-$\Mod$ be the category of finitely generated graded $R(\nu)$-modules, and let $R(\nu)$-$\fMod$ (resp. $R(\nu)$-$\pMod$) be the full subcategory of $R(\nu)$-$\Mod$ of finite-dimensional (resp. finitely generated projective) $R(\nu)$-modules.

Since $R(\nu)$ is Laurentian by Proposition \ref{basis}, there are only finitely many gr-irreducible $R(\nu)$-module, up to isomorphism and degree shifts. All gr-irreducible $R(\nu)$-module are finite-dimensional. Moreover, if $S$ is a gr-irreducible $R(\nu)$-module, then $\underline{S}$ is a irreducible $R(\nu)$-module by forgetting the grading.

Let $\B_{\nu}$ be the set of equivalence classes (under isomorphism and degree shifts) of
gr-irreducible $R(\nu)$-modules. The Grothendieck group $G_0(R(\nu))$ of $R(\nu)$-$\fMod$ is a free $\Z[q,q^{-1}]$-module with the basis $[S_b]_{b\in\B_{\nu}}$, where $q[M]=[M\{1\}]$ for $[M]\in G_0(R(\nu))$.  Each $S_b$ has a unique gr-indecomposable projective cover $P_b$. The Grothendieck group $K_0(R(\nu))$ of $R(\nu)$-$\pMod$ is a free $\Z[q,q^{-1}]$-module with the basis $[P_b]_{b\in\B_{\nu}}$.

\vskip 3mm

Let $\psi:R(\nu)\rightarrow R(\nu)$ be the anti-involution of $R(\nu)$ by flipping the diagrams about horizontal axis. For $P\in R(\nu)$-$\pMod$, let $\overline{P}=\HOM(P,R(\nu))^\psi$ be the gr-projective left $R(\nu)$-module with the action twisted by $\psi$. This gives a self-equivalence of $R(\nu)$-$\pMod$ and induces a $\Z[q,q^{-1}]$-antilinear involution of $K_0(R(\nu))$ denoted again by $^-$.

Define the $\Z[q,q^{-1}]$-bilinear pairing
$( \ , \ ):K_0(R(\nu))\times G_0(R(\nu))\rightarrow \Z[q,q^{-1}]$ by
$$([P],[M])=\gdim (P^\psi\otimes_{R(\nu)}M)=\gdim \HOM_{R(\nu)}(\overline{P},M).$$
Since $\K$ is algebraically closed, $G_0(R(\nu))$ and $K_0(R(\nu))$ are dual $\Z[q,q^{-1}]$-module under this pairing. There is also a symmetric $\Z[q,q^{-1}]$-bilinear form $( \ , \ ):K_0(R(\nu))\times K_0(R(\nu))\rightarrow \Z ((q))$ defined in the same way.

\vskip 3mm

\subsection{Character and quantum Serre relations}\

\vskip 2mm
For $M\in R(\nu)$-$\Mod$, define the character of $M$ as $$\Ch M=\sum_{\ii\in \Seq(\nu)}\gdim (1_{\ii}M)\ii.$$

We denote by $\Seqd(\nu)$ the set of sequences $\ii$ of $\nu$ with the `divided powers' for $i\in I^+$. Such a sequence is of the form
$$\ii=j_1\dots j_{p_0}i_1^{(n_1)}k_1\dots k_{p_1}i_2^{(n_2)}\dots i_t^{(n_t)}l_1\dots l_{p_t}, $$
where $i_1,\dots,i_t\in I^+$ and $\ii$ is of weight $\nu$.

For $i\in I^+$ and $n> 0$, let $e_{i,n}$ be the primitive idempotent of $R(ni)$ corresponding to  the element $x_1^{n-1}x_2^{n-2}\cdots x_{n-1}\partial _{\omega_0}$ of $NH_n$. For each $\ii\in \Seqd(\nu)$,
we assign the following idempotent of $R(\nu)$
$$1_{\ii}=1_{j_0\dots j_{p_0}}\otimes e_{i_1,n_1}\otimes 1_{k_1\dots k_{p_1}}\otimes e_{i_2,n_2}\otimes \cdots \otimes e_{i_t,n_t}\otimes 1_{l_1\dots l_{p_t}}.$$

\vskip 2mm
We abbreviate $\ii=\dots i_1^{(n_1)}\dots i_2^{(n_2)} \dots i_t^{(n_t)}\dots $ and $1_{\ii}=\cdots \otimes e_{i_1,n_1}\otimes \cdots \otimes e_{i_2,n_2}\otimes \cdots \otimes e_{i_t,n_t}\otimes \cdots$ for simplicity, and denote
$$\widehat{\ii}=\dots \underbrace{i_1\dots i_1}_{n_1}\dots \underbrace{i_2\dots i_2}_{n_2}\dots \underbrace{i_t\dots i_t}_{n_t}\dots\in \Seq(\nu) .$$
Let $\ii != [n_1]_{i_1}!\cdots [n_t]_{i_t}!$ and $\left<\ii \right>=\sum_{k=1}^t\frac{n_k(n_k-1)}{2}r_{i_k}$, we have by the structure of nil-Hecke algebra
$$\gdim (1_{\widehat{\ii}}M)=q^{-\left<\ii \right>}\ii !\cdot\gdim ({1_\ii M}).$$

\vskip 2mm

For $\ii\in \Seqd(\nu)$, let $_\ii P=1_\ii R(\nu)\{-\left<\ii \right>\}$ and $P_\ii = R(\nu)\psi(1_\ii)\{-\left<\ii \right>\}$. We have the following proposition which gives a categorification  of quantum Serre relations in $U^-$.

\begin{proposition}\label{Serre}
{\it For $i\in I^+$, $j\in I$ and $i\neq j$. Let $m=1-a_{ij}$. We have an isomorphism of graded left $R(\nu)$-modules
$$\bigoplus^{\lfloor \frac{m}{2} \rfloor}_{c=0}P_{i^{(2c)}ji^{(m-2c)}}\simeq\bigoplus^{\lfloor \frac{m-1}{2} \rfloor}_{c=0}P_{i^{(2c+1)}ji^{(m-2c-1)}}.$$
Moreover, for $i,j\in I$ and $i\cdot j=0$, we have an isomorphism $P_{ij}\simeq P_{ji}$.
}
\begin{proof}
The proof is the same as the `Box' calculations in \cite{KL2011}.
\end{proof}
\end{proposition}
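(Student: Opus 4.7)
My plan is to follow the diagrammatic ``box calculus'' of \cite{KL2011}, constructing explicit degree-zero homomorphisms between the two direct sums of divided-power projectives and verifying invertibility by the local diagrammatic relations. The second assertion is immediate: when $i\cdot j = 0$ relation (1) gives $\dcross{i}{j}=\dcrossA{i}{j}$, so right multiplication by a single crossing provides mutually inverse degree-zero maps $P_{ij}\leftrightarrows P_{ji}$. This step makes no use of whether $i$ or $j$ lies in $I^+$, and so works equally well for imaginary indices.

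For the first assertion, set $\nu = mi+j$ with $m = 1-a_{ij}$. The projectives appearing on each side are indexed by the position of the unique $j$-strand among the $m$ copies of $i$, and runs of $i$-strands are bundled using the primitive nil-Hecke idempotents $e_{i,n}$. The strategy is to construct, for each relevant pair of indices $(a,b)$, a matrix entry $\Phi_{b,a}$ given by a specific sum of diagrams in which the $j$-strand is slid through intervening $i$-strands, with the resulting crossings carrying controlled numbers of dots and everything framed by the appropriate $e_{i,n}$; and to build an analogous matrix $\Psi$ in the opposite direction. The degree shifts $\{-\langle\ii\rangle\}$ in the definition of $P_\ii$ are tuned exactly so that every such matrix entry is homogeneous of degree zero, and Proposition \ref{basis} guarantees that the $\K$-linear span of the straightened diagrams contains each target projective as a direct summand.

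The heart of the argument is the verification that $\Phi\Psi = \mathrm{id}$ and $\Psi\Phi = \mathrm{id}$. One repeatedly applies the quantum Serre relation (\ref{S1})
\[
\BraidL{i}{j}{i}\;-\;\BraidR{i}{j}{i}\;=\;\sum_{c=0}^{-a_{ij}-1}\threeDotStrands{i}{j}{i}
\]
to straighten each pair of adjacent $j$-crossings into a sum of dotted diagrams, and then absorbs the resulting dots into neighbouring $e_{i,n}$'s using standard nil-Hecke identities such as $e_{i,n}\,x_1^{n-1}x_2^{n-2}\cdots x_{n-1}\,e_{i,n} = e_{i,n}$. The outcome telescopes to the identity on the diagonal and cancels off-diagonally. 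The main obstacle is the combinatorial bookkeeping: tracking the binomial coefficients and signs produced by the iterated dot-sliding (relation (4)) and by repeated application of (\ref{S1}), and then verifying cancellation of every off-diagonal entry through an induction on the number of straightening steps. That $j\in I^-$ is allowed causes no genuinely new difficulty, since the only relations involving the $j$-strand that enter the argument -- namely (1), (4), and (\ref{S1}) -- hold in identical form regardless of whether $j$ is real or imaginary.
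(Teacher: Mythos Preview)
Your proposal is correct and follows precisely the approach indicated by the paper, which simply defers to the ``Box'' calculations of \cite{KL2011}; you have accurately described what those calculations entail and correctly observed that the relations (1), (4), and (\ref{S1}) used in the argument are insensitive to whether $j$ lies in $I^+$ or $I^-$, so the Khovanov--Lauda proof carries over verbatim. One tiny remark: in the second assertion the case $i=j$ (possible when $i\in I^-$ with $a_{ii}=0$) is trivially true and is not covered by your crossing argument, but this is harmless.
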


\vskip 3mm

\subsection{Induction and Restriction}\
\vskip 2mm

As in \cite[Section 2.6]{KL2009}, we define the induction and restriction functors as
\begin{equation*}
\begin{aligned}
& \Ind^{\nu+\nu'}_{\nu,\nu'}:R(\nu)\otimes R(\nu')\text{-}\Mod\rightarrow R(\nu+\nu')\text{-}\Mod,\ M\mapsto R(\nu+\nu')1_{\nu,\nu'}\otimes_{R(\nu)\otimes R(\nu')}M,\\
&\Res^{\nu+\nu'}_{\nu,\nu'}: R(\nu+\nu')\text{-}\Mod\rightarrow R(\nu)\otimes R(\nu')\text{-}\Mod,\ M\mapsto 1_{\nu,\nu'}M,
\end{aligned}
\end{equation*}
where $1_{\nu,\nu'}=1_\nu\otimes 1_{\nu'}$. Since $R(\nu+\nu')1_{\nu,\nu'}$ is a free graded right $R(\nu)\otimes R(\nu')$-module, the functors $\Ind^{\nu+\nu'}_{\nu,\nu'}$ and $\Res^{\nu+\nu'}_{\nu,\nu'}$ are both exact and take projective modules to projective modules. For $\ii\in \Seqd(\nu)$ and $\jj\in\Seqd(\nu')$, we have by the definition $\Ind_{\nu,\nu'}P_\ii\otimes P_\jj\simeq P_{\ii\jj}$.

\vskip 2mm

For $\ii\in \Seq(\nu)$, $\jj\in\Seq(\nu')$ and $\kk\in \Seq(\nu+\nu')$, we denote by  $\Sh(\ii,\jj;\kk)$ the set of all shuffles $u\in _{\ii\jj}R(\nu+\nu')_\kk$ from $\ii,\jj$ to $\kk$

$$\Shuffle$$
\vskip 2mm

The gr-projective $R(\nu)\otimes R(\nu')$-module $\Res_{\nu,\nu'}P_\kk$ has the following decomposition
$$\Res_{\nu,\nu'}P_\kk\simeq\bigoplus_{\ii\in\nu,\jj\in\nu', u\in \Sh(\ii,\jj;\kk)}P_\ii\otimes P_{\jj}\{|u|\}.$$
For $M\in R(\nu)\text{-}\Mod, N\in R(\nu')\text{-}\Mod$ and $\kk\in \Seq(\nu+\nu')$, we have the following equality, so called the `Quantum Shuffle Lemma'
$$\gdim (1_\kk \Ind_{\nu,\nu'}M\otimes N)=\sum_{\ii\in\nu,\jj\in\nu', u\in \Sh(\ii,\jj;\kk)}q^{|u|}\gdim (1_\ii M)\cdot \gdim (1_\jj N).$$

\vskip 2mm

\begin{proposition}\label{Mackey} (``Mackey Theorem") {\it
Let $\nu,\nu',\mu,\mu'\in \N[I]$ with $\nu+\nu'=\mu+\mu'$. For $M\in R(\mu)\text{-}\Mod, N\in R(\mu')\text{-}\Mod$, we have a filtration of  $\Res_{\nu,\nu'}\Ind_{\mu,\mu'}M\otimes N$ with subquotients over all $\lambda\in \N[I]$ such that $\nu-\lambda, \mu'-\lambda, \nu'+\lambda-\mu' \in\N[I]$, which are isomorphic to
$$\Ind^{\nu,\nu'}_{\nu-\lambda,\lambda,\nu'+\lambda-\mu',\mu'-\lambda} {}^\diamond(\Res^{\mu,\mu'}_{\nu-\lambda,\nu'+\lambda-\mu',\lambda,\mu'-\lambda}M\otimes N )\{-\lambda\cdot(\nu'+\lambda-\mu')\}.$$
Here if $\Res M\otimes N=Q_1\otimes Q_2\otimes Q_3\otimes Q_4$, then ${}^\diamond(\Res M\otimes N)=Q_1\otimes Q_3\otimes Q_2\otimes Q_4$.
}
\begin{proof}
The proof is the same as \cite[Proposition 2.8]{KL2009}.
\end{proof}
\end{proposition}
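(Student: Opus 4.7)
The plan is to follow \cite[Proposition 2.8]{KL2009}: realize the functor $\Res^{\nu+\nu'}_{\nu,\nu'}\Ind^{\mu+\mu'}_{\mu,\mu'}$ as tensoring with the $(R(\nu)\otimes R(\nu'),\,R(\mu)\otimes R(\mu'))$-bimodule $1_{\nu,\nu'}R(\nu+\nu')1_{\mu,\mu'}$, filter this bimodule by the Bruhat length of double coset representatives, and then apply the exact functor $-\otimes_{R(\mu)\otimes R(\mu')}(M\otimes N)$.

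The first step is to invoke Proposition \ref{basis} to classify minimal length $(S_\nu\times S_{\nu'},\,S_\mu\times S_{\mu'})$-double coset representatives in $S_{\nu+\nu'}$. These are in bijection with parameters $\lambda\in\N[I]$ satisfying $\nu-\lambda,\mu'-\lambda,\nu'+\lambda-\mu'\in\N[I]$, where $\lambda$ records the weight of those strands that emanate from the $\mu'$-block and end in the $\nu$-block. For the representative $\omega_\lambda$ the canonical braid diagram has degree $-\lambda\cdot(\nu'+\lambda-\mu')$, since every strand in the $\lambda$-block crosses every strand in the $(\nu'+\lambda-\mu')$-block exactly once, with each crossing contributing $-i\cdot j$ to the degree.

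Next, for each valid $\lambda$ I would identify the sub-bimodule $\Xi_{\leq\lambda}$ spanned by diagrams whose underlying coset has length $\leq\ell(\omega_\lambda)$, and check that the subquotient $\Xi_{\leq\lambda}/\Xi_{<\lambda}$ realises, as a bimodule, the functor
\[
\Ind^{\nu,\nu'}_{\nu-\lambda,\lambda,\nu'+\lambda-\mu',\mu'-\lambda}{}^\diamond\bigl(\Res^{\mu,\mu'}_{\nu-\lambda,\nu'+\lambda-\mu',\lambda,\mu'-\lambda}(-)\bigr)\{-\lambda\cdot(\nu'+\lambda-\mu')\}.
\]
Since induction is exact, applying $-\otimes_{R(\mu)\otimes R(\mu')}(M\otimes N)$ to this bimodule filtration produces the claimed filtration of $\Res\Ind(M\otimes N)$.

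The main obstacle will be verifying that this Bruhat filtration is stable under both left $R(\nu)\otimes R(\nu')$- and right $R(\mu)\otimes R(\mu')$-action. The essential calculation consists of resolving compositions of two adjacent coset-representative crossings using the local relations of $R(\nu+\nu')$: each correction term either eliminates a crossing (producing a strictly shorter coset representative, possibly with extra dots) or leaves the permutation unchanged while only adding dots; in either case the correction lies in an earlier piece of the filtration. The Borcherds-specific features — the vanishing of $\tau_k^2$ for equal imaginary labels, the symmetric forms of the crossing-vs-dot relations for $i\in I^-$, and the absence of lower-order correction terms in the braid relation whenever an imaginary label is present — only simplify the right-hand sides of the KL2009 calculation and introduce no new phenomena, so the argument adapts verbatim.
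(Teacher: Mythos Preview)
Your proposal is correct and is precisely the approach the paper has in mind: the paper's own proof consists of the single sentence ``The proof is the same as \cite[Proposition 2.8]{KL2009},'' and what you have outlined is exactly that argument, with the appropriate observation that the Borcherds-specific local relations only simplify the correction terms in the filtration check.
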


\vskip 3mm

\subsection{Bialgebra $K_0(R)$} \
\vskip 2mm
Let $R=\bigoplus_{\nu\in\N[I]}R(\nu)$ and form the following categories of $R$-modules
\begin{equation*}
 R\text{-}\fMod=\bigoplus_{\nu\in \N[I]}R(\nu)\text{-}\fMod, \ R\text{-}\pMod=\bigoplus_{\nu\in \N[I]}R(\nu)\text{-}\pMod.
\end{equation*}
The Grothendieck groups of $R$-$\fMod$ (resp. $R$-$\pMod$) is given by $G_0(R)=\bigoplus_{\nu\in \N[I]}G_0(R(\nu))$ (resp. $K_0(R)=\bigoplus_{\nu\in \N[I]}K_0(R(\nu))$). By summing up all $\nu,\nu'$, the induction and restriction functors induce the following $\Z[q,q^{-1}]$-linear maps
$$\widetilde{\Ind}:K_0(R)\otimes K_0(R)\rightarrow K_0(R), \ \widetilde{\Res}:K_0(R)\rightarrow K_0(R)\otimes K_0(R).$$
Now, $K_0(R)$ becomes a $\Z[q,q^{-1}]$-algebra with the multiplication given by  $xy:=\widetilde{\Ind}(x\otimes y)$ for all $x,y\in K_0(R)$. If we equip $K_0(R)\otimes K_0(R)$ with a twisted algebra structure via
$$(x_1\otimes x_2)(y_1\otimes y_2)=q^{-|x_2|\cdot |y_1|}x_1y_1\otimes x_2y_2,$$
then $\widetilde{\Res}$ is a $\Z[q,q^{-1}]$-algebra homomorphism by Mackey's Theorem given in Proposition \ref{Mackey}.

\vskip 2mm

Extend the bilinear pairings in Section 3.2 to $K_0(R)\times K_0(R)$ and to $K_0(R)\times G_0(R)$ by requiring $([M],[N])=0$ if $M\in R(\nu)\text{-}\pMod$, $N\in R(\mu)\text{-}\pMod$ (or $R(\mu)\text{-}\fMod$) with $\nu\neq \mu$. We have the following proposition from the definition.

\begin{proposition}\label{Bi}
{\it The symmetric bilinear form on $K_0(R)$ satisfies
\begin{itemize}
\item[(i)] $(1,1) = 1$,
\item[(ii)] $([P_i], [P_j]) = \delta_{ij}(1-q_i^2)^{-1}$ for all $i,j\in I$,
\item[(iii)] $(x, yz) = (\widetilde{\Res}(x), y \otimes z)$  for $x,y,z
\in K_0(R),$
\end{itemize}
where $1=\K$ as a module over $R(0)=\K$.
}\end{proposition}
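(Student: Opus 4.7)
The plan is to verify (i), (ii), (iii) directly from the definition $([P],[M]) = \gdim(P^\psi \otimes_{R(\nu)} M)$, with the only substantive content being an adjointness computation for (iii).

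For (i), I would observe that $R(0) = \K$ and the unit object is $\K$ itself, so $(1,1) = \gdim(\K \otimes_\K \K) = 1$ is immediate. For (ii), the weight grading forces $([P_i],[P_j]) = 0$ whenever $i \neq j$. When $i = j$, a single strand admits no crossings, so $R(i)$ is the polynomial algebra $\K[x_1]$ with $\deg x_1 = 2r_i$, irrespective of whether $i \in I^+$ or $i \in I^-$. Since $P_i = R(i)$ is free of rank one and $\psi$ fixes it, the pairing evaluates to $\gdim \K[x_1] = (1 - q_i^2)^{-1}$.

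For (iii), the key ingredients are the explicit presentation $\Ind_{\nu,\nu'}(L) = R(\nu+\nu')\,1_{\nu,\nu'} \otimes_{R(\nu) \otimes R(\nu')} L$ and the observation that $\psi$ fixes the idempotent $1_{\nu,\nu'}$, since each summand $1_\ii$ is a diagram of vertical strands invariant under horizontal reflection. Taking classes $x = [M]$, $y = [N_1]$, $z = [N_2]$ of projectives in weights $\nu+\nu'$, $\nu$, $\nu'$ respectively, I would compute
\begin{align*}
(x, yz) &= \gdim\bigl(M^\psi \otimes_{R(\nu+\nu')} R(\nu+\nu')\,1_{\nu,\nu'} \otimes_{R(\nu)\otimes R(\nu')} (N_1 \otimes N_2)\bigr) \\
&= \gdim\bigl((1_{\nu,\nu'} M)^\psi \otimes_{R(\nu)\otimes R(\nu')} (N_1 \otimes N_2)\bigr) \\
&= ([\Res^{\nu+\nu'}_{\nu,\nu'} M], [N_1 \otimes N_2]) = (\widetilde{\Res}\, x,\, y \otimes z),
\end{align*}
where the middle equality is the one point that requires care: rewriting $M^\psi 1_{\nu,\nu'}$ as $(1_{\nu,\nu'} M)^\psi$ is legitimate precisely because $\psi(1_{\nu,\nu'}) = 1_{\nu,\nu'}$. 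The weight-mismatched cases are handled by the convention that the pairing vanishes on distinct weight components, and both sides of (iii) then vanish.

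No step presents a serious obstacle; the computation amounts to the standard Frobenius reciprocity between $\Ind$ and $\Res$, combined with the $\psi$-invariance of the idempotent $1_{\nu,\nu'}$, and the extension of the pairing to tensor products in the usual multiplicative fashion.
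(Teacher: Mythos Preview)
Your proposal is correct and aligns with the paper's approach: the paper offers no explicit proof, simply stating ``We have the following proposition from the definition,'' and your argument supplies precisely the direct verification that phrase intends. The only point worth noting is that the paper does not spell out how the pairing extends to $K_0(R)\otimes K_0(R)$; your computation implicitly uses the identification $K_0(R(\nu)\otimes R(\nu'))\cong K_0(R(\nu))\otimes K_0(R(\nu'))$ together with the multiplicative extension $(a\otimes b,c\otimes d)=(a,c)(b,d)$, which is standard and consistent with how $\{\,,\,\}$ is handled on $U^-\otimes U^-$.
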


\vskip 6mm

\section{\textbf{Categorication of $U^-$ and $_{\mathcal A}U^-$} }
\vskip 2mm

As in \cite[Proposition 3.4]{KL2009}, we connect the Grothendieck group $K_0(R)$ with the half part of quantum Borcherds algebra $U^-$ as follows. Let $K_0(R)_{\Q(q)}=\Q(q)\otimes_{\Z[q,q^{-1}]}K_0(R)$. By Proposition \ref{Serre}, we have a well-defined $\Q(q)$-algebra homomorphism
$$\Gamma_{\Q(q)}:U^-\rightarrow K_0(R)_{\Q(q)}$$
given by $\Gamma_{\Q(q)}(f_i)=[P_i]$ for all $i\in I$. By Proposition \ref{Bi}, the bilinear form $\{ \ , \ \}$ on $U^-$ and the form $( \ , \ )$ on $K_0(R)_{\Q(q)}$ take same values under $\Gamma_{\Q(q)}$, that is
$$(\Gamma_{\Q(q)}(x),\Gamma_{\Q(q)}(y))=\{x,y\} \ \ \text{for} \ x,y\in U^-.$$
Thus $\Gamma_{\Q(q)}$ is injective by the non-degeneracy of $\{ \ , \ \}$. Moreover, it induces an injective $\Z[q,q^{-1}]$-algebra homomorphism
$\Gamma:_{\mathcal A}U^-\rightarrow K_0(R)$.

\vskip 2mm

In the rest of this section, we shall prove the surjectivity of $\Gamma_{\Q(q)}$ and $\Gamma$ using the frameworks given in \cite[Chapter 5]{K2005} and in \cite[Section 3.2]{KL2009}. Recall that the one-dimensional trivial module $V(i^n)$ is the unique gr-irreducible $R(ni)$-module for $i\in I^-$.

\vskip 2mm

\begin{lemma}
{\it Let $i\in I^-$ and let $(m_1,\dots,m_r)$ be a composition of $n$.
 \begin{itemize}
\item[(i)] $\Res^n_{m_1,\dots,m_r}V(i^n)\simeq V(i^{m_1})\otimes \cdots \otimes V(i^{m_r})$,
\item[(ii)] $\Res^n_{n-1}V(i^n)=V(i^{n-1})$,
\item[(iii)] $\Ind^{n+m}_{n,m}V(i^n)\otimes V(i^m)$ has a unique (graded) maximal submodule. In particular, the graded head $\hd\Ind^{n+m}_{n,m}V(i^n)\otimes V(i^m)$ is irreducible.
\end{itemize}
}
\begin{proof}
The assertions (i) and (ii) are obvious. We shall prove (iii). Let $D_{(n,m)}$ be the set of minimal length left $S_n\times S_m$-coset representatives in $S_{n+m}$, then $\Ind^{n+m}_{n,m}V(i^n)\otimes V(i^m)$ has a basis $\{\tau_\omega\otimes v\mid \omega\in D_{n,m}\}$ for a nonzero $v\in V(i^n)\otimes V(i^m)$. The $\K$-vector space $\Span\{\tau_\omega\otimes v\mid \omega\in D_{n,m},\omega\neq 1\}$ is a maximal submodule of $\Ind^{n+m}_{n,m}V(i^n)\otimes V(i^m)$ since for $\lambda\in S_{n+m}$ and $\omega\in D_{n,m}$ ($\omega\neq 1$), if $\lambda\omega\in S_n\times S_m$, we must have $l(\lambda\omega)<l(\lambda)+l(\omega)$. The uniqueness follows from the same argument in Lemma \ref{M}.
\end{proof}
\end{lemma}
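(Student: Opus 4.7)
My plan hinges on the one-dimensional trivial description of $V(i^n)$ for $i\in I^-$ established just above: every $x_k$ and every $\tau_k$ annihilates the generator, which collapses the restriction and induction computations to combinatorics in the symmetric group.

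Parts (i) and (ii) are then essentially formal. Restricting a one-dimensional trivial $R(ni)$-module through the inclusion $R(m_1 i)\otimes\cdots\otimes R(m_r i)\hookrightarrow R(ni)$ (via the idempotent $1_{m_1,\dots,m_r}$) yields a one-dimensional module on which every generator of the subalgebra still acts by zero; the only such graded module, up to degree shift, is $V(i^{m_1})\otimes\cdots\otimes V(i^{m_r})$. Part (ii) is the composition $(n-1,1)$ special case.

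For (iii), I would first pin down the underlying $\K$-space. By Proposition \ref{basis} specialized to a single vertex, $R((n+m)i)$ is a free right $R(ni)\otimes R(mi)$-module with basis $\{\tau_\omega\mid \omega\in D_{n,m}\}$, where $D_{n,m}$ is the set of minimal-length representatives of the left cosets $\omega(S_n\times S_m)$ in $S_{n+m}$. Because every $x_j$ kills $V(i^n)\otimes V(i^m)$ and because $x_k\tau_k=\tau_k x_{k+1}$ forces $x_k\tau_\omega=\tau_\omega x_{\omega^{-1}(k)}$, all $x_k$ act by zero on the whole induced module, which therefore has $\K$-basis $\{\tau_\omega\otimes v\mid \omega\in D_{n,m}\}$ for a fixed nonzero $v\in V(i^n)\otimes V(i^m)$. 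I would then identify $M=\Span\{\tau_\omega\otimes v\mid \omega\neq 1\}$ as the unique maximal submodule: closure under $\tau_\lambda$ follows from $\tau_k^2=0$ and the braid relation together with the length-additive coset decomposition $\lambda\omega=\omega'\mu$ (with $\omega'\in D_{n,m}$, $\mu\in S_n\times S_m$) and the vanishing of $\tau_\mu$ on $v$ for $\mu\neq 1$; the quotient $\Ind^{n+m}_{n,m}V(i^n)\otimes V(i^m)/M$ is one-dimensional trivial, hence isomorphic to $V(i^{n+m})$; and uniqueness of $M$ follows by the iterative subtraction argument of Lemma \ref{M}, showing that any submodule $W$ containing an element with nonzero $1\otimes v$-coefficient must contain $1\otimes v$, hence equal the whole induced module.

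The main obstacle is the closure check for $M$ in the subcase $\omega'=1$, i.e.\ when $\lambda\omega\in S_n\times S_m$: one has to genuinely use that $\omega^{-1}$ is a minimum-length representative of its right $(S_n\times S_m)$-coset in order to obtain $\ell(\lambda)+\ell(\omega)=\ell(\mu)+2\ell(\omega)>\ell(\mu)=\ell(\lambda\omega)$ for $\omega\neq 1$ and conclude $\tau_\lambda\tau_\omega=0$. Everywhere else the triviality of $V(i^n)\otimes V(i^m)$ and the nilpotency $\tau_k^2=0$ reduce the verification to routine nil-Hecke combinatorics.
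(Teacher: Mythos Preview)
Your proposal is correct and follows essentially the same route as the paper's proof: the same coset basis $\{\tau_\omega\otimes v\mid \omega\in D_{n,m}\}$, the same candidate maximal submodule $M=\Span\{\tau_\omega\otimes v\mid \omega\neq 1\}$, the same length inequality $\ell(\lambda\omega)<\ell(\lambda)+\ell(\omega)$ when $\lambda\omega\in S_n\times S_m$ and $\omega\neq 1$, and the same appeal to Lemma~\ref{M} for uniqueness. Your write-up is in fact more detailed than the paper's---you spell out why every $x_k$ annihilates the induced module and you supply the coset-representative length calculation that the paper leaves implicit---but the underlying argument is the same.
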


\vskip 2mm
For $i\in I$ and $n\geq 0$, define the functor $$\Delta_{i^n}:R(\nu)\text{-}\Mod\rightarrow R(\nu-ni)\otimes R(ni)\text{-}\Mod,\ M\mapsto (1_{\nu-ni}\otimes 1_{ni})M.$$
By Frobenius reciprocity, we have for $M\in R(\nu)$-$\Mod$ and $N\in R(\nu-ni)$-$\Mod$
\begin{equation}\label{Frob}
\HOM_{R(\nu)}(\Ind_{\nu-ni,ni}N\otimes V(i^n),M)\simeq \HOM_{R(\nu-ni)\otimes R(ni)}(N\otimes V(i^n),\Delta_{i^n}M).\end{equation}
\vskip 2mm
Let $\varepsilon_i(M)=\Max\{n\geq 0\mid \Delta_{i^n}M\neq 0\}$ be the number of the largest $i$-tail in sequence $\kk$ such that $1_\kk M\neq 0$.

\vskip 2mm

\begin{lemma}\label{L1}
{\it Let $i\in I$ and  $M\in R(\nu)$-$\fMod$ be a gr-irreducible module. If $N\otimes V(i^n)$ is a gr-irreducible submodule of $\Delta_{i^n}M$ for some $0\leq n\leq \varepsilon_{i}(M)$, then $\varepsilon_i(N)=\varepsilon_i(M)-n$.
}
\begin{proof}
Let $\varepsilon_i(N)=a$ and $\varepsilon_i(M)=b$, there exists a sequence $\jj i^{a}\in \Seq({\nu-ni})$ such that $1_{\jj i^{a}}N\neq 0$. Hence $1_{\jj i^{a}}\otimes 1_{i^n}(N\otimes V(i^n))=1_{\jj i^{a+n}}(N\otimes V(i^n))\neq 0$. It follows that $b\geq a+n$.

By Frobenius reciprocity (\ref{Frob}) and the irreducibility of $M$, $M$ is a quotient of $\Ind_{\nu-ni,ni}N\otimes V(i^n)$. The exactness of $\Delta_{i^n}$ implies $\Delta_{i^b}M$ is a quotient of $\Delta_{i^b}\Ind_{\nu-ni,ni}N\otimes V(i^n)$, we get $\Delta_{i^b}\Ind_{\nu-ni,ni}N\otimes V(i^n)\neq 0$. On the other hand, we have $\varepsilon_i(\Ind_{\nu-ni,ni}N\otimes V(i^n))=a+n$ by the Shuffle Lemma. Therefore $b\leq a+n$.
\end{proof}
\end{lemma}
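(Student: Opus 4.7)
The plan is to establish the two inequalities $\varepsilon_i(M)\geq\varepsilon_i(N)+n$ and $\varepsilon_i(M)\leq\varepsilon_i(N)+n$ separately; set $a=\varepsilon_i(N)$ and $b=\varepsilon_i(M)$.

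For the lower bound $b\geq a+n$, I would exhibit a weight space of $M$ with an $i$-tail of length $a+n$. By definition of $a$ there is a sequence of the form $\jj i^{a}$ with $1_{\jj i^{a}}N\neq 0$; since $V(i^n)$ is concentrated on the single weight sequence $i^n$, this yields $1_{\jj i^{a+n}}(N\otimes V(i^n))\neq 0$. The inclusion $N\otimes V(i^n)\hookrightarrow\Delta_{i^n}M=1_{\nu-ni}\otimes 1_{i^n}M$ then forces $1_{\jj i^{a+n}}M\neq 0$, hence $b\geq a+n$.

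For the upper bound $b\leq a+n$, the plan is to pass to an induced module and apply the Quantum Shuffle Lemma. By Frobenius reciprocity (\ref{Frob}), the nonzero inclusion $N\otimes V(i^n)\hookrightarrow\Delta_{i^n}M$ corresponds to a nonzero homomorphism $\Ind_{\nu-ni,ni}N\otimes V(i^n)\to M$, which is surjective by the irreducibility of $M$. Applying the exact functor $\Delta_{i^b}$ then shows that $\Delta_{i^b}\Ind_{\nu-ni,ni}N\otimes V(i^n)\neq 0$. On the other hand, since $V(i^n)$ is supported only on the sequence $i^n$, the Shuffle Lemma expresses each nonzero weight space $1_{\kk}\Ind_{\nu-ni,ni}N\otimes V(i^n)$ as a sum over shuffles of a sequence $\ii$ occurring in $N$ with $i^n$; the trailing $i$-run of such a $\kk$ can only collect letters from the trailing $i$-run of $\ii$ (of length at most $a$) together with the $n$ letters of $i^n$, producing a tail of length at most $a+n$. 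Thus $b\leq a+n$.

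The step I expect to be the main obstacle is the combinatorial bookkeeping in the upper bound: one must argue carefully that whenever $\kk$ ends in $i^c$ and arises from a shuffle of $\ii$ with $i^n$, the trailing $i$'s of $\kk$ must come either from trailing $i$'s of $\ii$ or from letters of $i^n$, so that $c\leq a+n$. Once this is verified, combining the two inequalities gives $\varepsilon_i(N)=\varepsilon_i(M)-n$.
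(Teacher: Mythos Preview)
Your proposal is correct and follows essentially the same route as the paper's proof: both split into the two inequalities $b\geq a+n$ and $b\leq a+n$, using a weight-space argument for the first and Frobenius reciprocity together with the Shuffle Lemma for the second. The only difference is that you spell out the shuffle combinatorics in slightly more detail than the paper, which simply asserts $\varepsilon_i(\Ind_{\nu-ni,ni}N\otimes V(i^n))=a+n$ as a direct consequence of the Shuffle Lemma.
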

\vskip 2mm
\begin{lemma}\label{L2}
{\it Let $i\in I$ and $N\in R(\nu)$-$\fMod$ be a gr-irreducible module with $\varepsilon_i(N)=0$. Set $M=\Ind_{\nu,ni}N\otimes V(i^n)$. Then
\begin{itemize}
\item[(i)] $\Delta_{i^n}M\simeq N\otimes V(i^n)$,
\item[(ii)] $\hd M$ is gr-irreducible with $\varepsilon_i(\hd M)=n$,
\item[(iii)] all other composition factors $L$ of $M$ have $\varepsilon_i(L)<n$.
\end{itemize}
}
\begin{proof}
In the case of $i\in I^+$, the lemma has been proved in \cite[Lemma 3.7]{KL2009}. We now consider the cases $i\in I^-$.

(i) By Frobenius reciprocity (\ref{Frob}) and the irreducibility of $N\otimes V(i^n)$, we have $N\otimes V(i^n)\hookrightarrow \Delta_{i^n} M$ as a graded submodule. Assume $\text{ht}(\nu)=m$, then $\Ch V(i^n)=i^n$ and $\Ch N=\sum_{\jj\in \nu,j_m\neq i}\gdim(1_\jj N)\jj$. By Shuffle Lemma, we have
$$\Ch M=\sum_{\kk\in \nu+ni}\left(\sum_{\jj\in \nu,j_m\neq i, u\in\Sh(\jj,i^n;\kk)}q^{|u|}\gdim(1_{\jj}N)\right)\kk.$$
It follows that $$\Ch (\Delta_{i^n}M)=\sum_{\jj\in \nu,j_m\neq i}\gdim(1_\jj N)\jj i^n=\Ch (N\otimes V(i^n)).$$
Hence $\Delta_{i^n}M\simeq N\otimes V(i^n)$.

(ii) For any nonzero quotient $Q$ of $M$, we have $N\otimes V(i^n)\hookrightarrow \Delta_{i^n} Q$ by Frobenius reciprocity (\ref{Frob}). Assume we have the decomposition
 $$\hd M=M/{J^{\text{gr}}M}=M/M_1\oplus M/M_2 \oplus \cdots \oplus M/M_s,$$ such that each $M/M_k$ is gr-irreducible. Then $N\otimes V(i^n)$ is embedded into each $\Delta_{i^n}(M/M_k)$ and $\Delta_{i^n}(\hd M)$, which are quotients of $\Delta_{i^n}M$ by the exactness of $\Delta_{i^n}$. It follows from (i) that $\Delta_{i^n}(\hd M)\simeq \Delta_{i^n}(M/M_k) \simeq N\otimes V(i^n)$. Hence $\hd M$ must be gr-irreducible.
Moreover, we have $\varepsilon_i (\hd M)=\varepsilon_i (M)=n$.

(iii) Since we have proved $\Delta_{i^n}(\hd M)\simeq \Delta_{i^n}M$. Our assertion follows from the exactness of $\Delta_{i^n}$.
\end{proof}
\end{lemma}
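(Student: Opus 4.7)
The case $i \in I^+$ is covered by \cite[Lemma 3.7]{KL2009}, so the plan is to reduce to $i \in I^-$ and adapt the argument, exploiting that $V(i^n)$ is now a one-dimensional trivial module with character $i^n$.

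For part (i), Frobenius reciprocity (\ref{Frob}) applied to the identity on $M$ produces a graded homomorphism $N \otimes V(i^n) \to \Delta_{i^n} M$ which is nonzero and, by gr-irreducibility of $N \otimes V(i^n)$, injective. To promote this to an isomorphism I would compute both sides via the Quantum Shuffle Lemma. Write $\Ch N = \sum_{\jj} \gdim(1_{\jj} N)\,\jj$; the hypothesis $\varepsilon_i(N) = 0$ forces every such $\jj$ to end in a letter $\neq i$. Consequently, among the shuffles $u \in \Sh(\jj, i^n; \kk)$ that produce a sequence $\kk$ ending in $i^n$, only the trivial shuffle (placing the $i^n$-block at the right) can contribute, and it does so with $|u|=0$. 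This collapses $\Ch(\Delta_{i^n} M)$ to $\Ch(N)\cdot i^n = \Ch(N \otimes V(i^n))$, upgrading the injection to an isomorphism.

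For part (ii), given any nonzero quotient $Q$ of $M$, Frobenius reciprocity again yields an injection $N \otimes V(i^n) \hookrightarrow \Delta_{i^n} Q$; but by exactness of $\Delta_{i^n}$ and (i), $\Delta_{i^n} Q$ is a quotient of $N \otimes V(i^n)$, so gr-irreducibility forces $\Delta_{i^n} Q \simeq N \otimes V(i^n)$. Decomposing $\hd M = \bigoplus_{k=1}^s M/M_k$ into gr-irreducibles, each summand contributes a copy of $N \otimes V(i^n)$ inside $\Delta_{i^n}(\hd M)$, which is itself a quotient of the single copy $\Delta_{i^n} M \simeq N \otimes V(i^n)$; a graded-dimension count rules out $s > 1$, so $\hd M$ is gr-irreducible. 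For $\varepsilon_i$, the Shuffle Lemma together with $\varepsilon_i(N)=0$ gives $\varepsilon_i(M) = n$, so $\varepsilon_i(\hd M) \leq n$, with equality forced by $\Delta_{i^n}(\hd M) \neq 0$. Part (iii) then follows from exactness of $\Delta_{i^n}$: summing $[\Delta_{i^n} L]$ over composition factors $L$ in the Grothendieck group reproduces $[\Delta_{i^n} M] = [N \otimes V(i^n)]$, and the head alone already accounts for this, so $\Delta_{i^n} L = 0$ (equivalently $\varepsilon_i(L) < n$) for every other factor.

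The main obstacle is the shuffle count in (i): one must verify carefully that no nontrivial interleaving of an $i^n$-block into a sequence $\jj$ lacking an $i$-tail can still produce a $\kk$ ending in $i^n$. Once the hypothesis $\varepsilon_i(N)=0$ is used to pin the last letter of each $\jj$ strictly to the left of the $i^n$-block, the remainder of the lemma is formal, resting only on Frobenius reciprocity and the exactness of $\Delta_{i^n}$.
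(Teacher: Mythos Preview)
Your proposal is correct and follows essentially the same approach as the paper's proof: Frobenius reciprocity for the injection in (i), the Shuffle Lemma with the observation that $\varepsilon_i(N)=0$ forces only the trivial shuffle to land in the $i^n$-tail, and then the exactness of $\Delta_{i^n}$ combined with a dimension comparison on the head for (ii) and (iii). If anything, you are slightly more explicit than the paper about why the shuffle collapses and about the dimension count ruling out $s>1$.
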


\vskip 2mm
\begin{proposition}\label{P1}
{\it Let $i\in I$ and  $M\in R(\nu)$-$\fMod$ be a gr-irreducible module with $\varepsilon_i(M)=n$. Then $\Delta_{i^n}M$ is isomorphic to $K\otimes V(i^n)$ for some gr-irreducible $K\in R(\nu-ni)$-$\fMod$ with $\varepsilon_i(K)=0$. Furthermore, $M\simeq \hd\Ind_{\nu-ni,ni}K\otimes V(i^n)$ in this case.
}
\begin{proof}
Choose a gr-irreducible submodule $K\otimes V(i^n)$ of $\Delta_{i^n}M$, then we have $\varepsilon_i(K)=0$ by Lemma \ref{L1}. By Frobenius reciprocity (\ref{Frob}) and the irreducibility of $M$,
$M$ is a quotient of $\Ind_{\nu-ni,ni}K\otimes V(i^n)$. Therefore, $\Delta_{i^n}M$ is a quotient of  $\Delta_{i^n}(\Ind_{\nu-ni,ni}K\otimes V(i^n))$, which is isomorphic to $K\otimes V(i^n)$ according to Lemma \ref{L2} (i). Now, $\Delta_{i^n}M\simeq K\otimes V(i^n)$ since $K\otimes V(i^n)$ is gr-irreducible.

Since we have a surjective map $\Ind_{\nu-ni,ni}K\otimes V(i^n)\twoheadrightarrow M$ and since $\hd\Ind_{\nu-ni,ni}K\otimes V(i^n)$ is gr-irreducible by Lemma \ref{L2} (ii), we see that $M\simeq \hd\Ind_{\nu-ni,ni}K\otimes V(i^n)$.
\end{proof}
\end{proposition}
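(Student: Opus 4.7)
The plan is to produce $K$ as the first tensor factor of a gr-irreducible submodule of $\Delta_{i^n}M$, then close the loop via Frobenius reciprocity so that $\Delta_{i^n}M$ is sandwiched between two copies of $K \otimes V(i^n)$ and forced to coincide with it. Every piece needed is already in place: Lemma \ref{L1} controls $\varepsilon_i$ on submodules, Lemma \ref{L2}(i) computes $\Delta_{i^n}$ of an induced module, and Lemma \ref{L2}(ii) gives gr-irreducibility of the head.

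First I would observe that $\varepsilon_i(M) = n$ says precisely that $\Delta_{i^n}M$ is a nonzero finite-dimensional $R(\nu-ni) \otimes R(ni)$-module, hence admits a gr-irreducible submodule $T$. Since $\K$ is algebraically closed and $V(i^n)$ is the unique gr-irreducible $R(ni)$-module up to degree shift, the standard classification of simples over a tensor product algebra gives $T \simeq K \otimes V(i^n)\{d\}$ for some gr-irreducible $K \in R(\nu-ni)\text{-}\fMod$ and some shift $d$; absorbing $d$ into $K$, I may take $T = K \otimes V(i^n)$. Applying Lemma \ref{L1} to this submodule yields $\varepsilon_i(K) = \varepsilon_i(M) - n = 0$.

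Next, the inclusion $K \otimes V(i^n) \hookrightarrow \Delta_{i^n}M$ corresponds under Frobenius reciprocity (\ref{Frob}) to a nonzero $R(\nu)$-homomorphism $\Ind_{\nu-ni,ni} K \otimes V(i^n) \to M$, which is surjective by the gr-irreducibility of $M$. Since $\Delta_{i^n}$ is exact, $\Delta_{i^n}M$ is a quotient of $\Delta_{i^n}\Ind_{\nu-ni,ni} K \otimes V(i^n)$, and Lemma \ref{L2}(i) identifies the latter with $K \otimes V(i^n)$. Combining this surjection onto $\Delta_{i^n}M$ with the original inclusion $K \otimes V(i^n) \hookrightarrow \Delta_{i^n}M$, and noting that $K \otimes V(i^n)$ is gr-irreducible, forces $\Delta_{i^n}M \simeq K \otimes V(i^n)$. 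For the final assertion, the surjection $\Ind_{\nu-ni,ni} K \otimes V(i^n) \twoheadrightarrow M$ factors through $\hd \Ind_{\nu-ni,ni} K \otimes V(i^n)$, which is gr-irreducible by Lemma \ref{L2}(ii); a surjection between gr-irreducibles is an isomorphism, so $M \simeq \hd \Ind_{\nu-ni,ni} K \otimes V(i^n)$.

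The only step I expect to require care is the opening structural observation that every gr-irreducible submodule of $\Delta_{i^n}M$ genuinely has the outer tensor product form $K \otimes V(i^n)$. This relies on algebraic closedness of $\K$ together with the uniqueness of $V(i^n)$ among gr-irreducibles of $R(ni)$ proved in the previous section; without it, one cannot pass from ``some gr-irreducible submodule'' to the explicit shape needed to feed into Lemmas \ref{L1} and \ref{L2}. Everything after that is a direct crank of exactness of $\Delta_{i^n}$, Frobenius reciprocity, and the head/quotient machinery.
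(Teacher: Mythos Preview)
Your proposal is correct and follows essentially the same route as the paper's proof: pick a gr-irreducible submodule $K\otimes V(i^n)$ of $\Delta_{i^n}M$, use Lemma~\ref{L1} to get $\varepsilon_i(K)=0$, apply Frobenius reciprocity to obtain a surjection $\Ind_{\nu-ni,ni}K\otimes V(i^n)\twoheadrightarrow M$, then invoke Lemma~\ref{L2}(i) and gr-irreducibility to pin down $\Delta_{i^n}M$, and finish with Lemma~\ref{L2}(ii). The only difference is that you make explicit the justification (via algebraic closedness of $\K$ and uniqueness of $V(i^n)$) for why a gr-irreducible submodule of $\Delta_{i^n}M$ necessarily has the outer tensor form $K\otimes V(i^n)$, a point the paper takes for granted.
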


\vskip 2mm
\begin{corollary}\label{Coo}
{\it Let $i\in I$ and  $M,M' \in R(\nu)$-$\fMod$ are gr-irreducible module with $\varepsilon_i(M)=\varepsilon_i(M')=n$. Assume $M\not\simeq M'$ and
$$\Delta_{i^n}M\simeq K\otimes V(i^n), \ \ \Delta_{i^n}M'\simeq K'\otimes V(i^n)$$
 for gr-irreducible $K,K'\in R(\nu-ni)$-$\fMod$ with $\varepsilon_i(K)=\varepsilon_i(K')=0$. Then $K\not\simeq K'$.
}
\begin{proof}
If  $K\simeq K'$, then
$M\simeq \hd\Ind_{\nu-ni,ni}K\otimes V(i^n)\simeq \hd\Ind_{\nu-ni,ni}K'\otimes V(i^n)\simeq M'$ by Proposition \ref{P1}. This proves our claim.
\end{proof}
\end{corollary}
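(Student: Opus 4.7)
The plan is to argue by contraposition, leveraging Proposition \ref{P1} as a black box: it already tells us how to reconstruct $M$ from the data $(K,n)$, so if two irreducibles give rise to the same $K$ they must themselves coincide.

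More precisely, I would suppose for contradiction that $K\simeq K'$ as graded $R(\nu-ni)$-modules. Then the induced modules $\Ind_{\nu-ni,ni}K\otimes V(i^n)$ and $\Ind_{\nu-ni,ni}K'\otimes V(i^n)$ are isomorphic, hence have isomorphic graded heads. By the second assertion of Proposition \ref{P1}, applied to both $M$ and $M'$ (whose hypotheses are satisfied by assumption: each has $\varepsilon_i$-value equal to $n$, and the respective $\Delta_{i^n}$-restrictions have the required tensor product form with $\varepsilon_i(K)=\varepsilon_i(K')=0$), we get
\[
M \;\simeq\; \hd\Ind_{\nu-ni,ni}K\otimes V(i^n) \;\simeq\; \hd\Ind_{\nu-ni,ni}K'\otimes V(i^n) \;\simeq\; M',
\]
contradicting $M\not\simeq M'$.

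There is essentially no obstacle here once Proposition \ref{P1} is in hand; the argument is a one-line formal consequence. The only mild caveat is to make sure that degree shifts are tracked correctly, i.e.\ that the isomorphism $K\simeq K'$ is a genuine isomorphism of graded modules (not merely up to shift), so that the induced modules and hence their heads are isomorphic as graded modules. This is automatic from the statement of the corollary, which compares $\Delta_{i^n}M$ and $\Delta_{i^n}M'$ as graded modules via the same tensor factor $V(i^n)$.
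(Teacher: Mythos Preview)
Your argument is correct and is essentially identical to the paper's proof: assume $K\simeq K'$, apply Proposition~\ref{P1} to recover $M\simeq \hd\Ind_{\nu-ni,ni}K\otimes V(i^n)\simeq \hd\Ind_{\nu-ni,ni}K'\otimes V(i^n)\simeq M'$, contradicting the hypothesis. Your remark about tracking degree shifts is a reasonable sanity check, but as you note it is automatic from the graded setup.
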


\vskip 2mm

\begin{theorem}
{\it The map $\Ch:G_0(R(\nu))\rightarrow \Z[q,q^{-1}]\Seq(\nu)$ is injective.}
\begin{proof}
We prove the characters of gr-irreducible $R(\nu)$-modules in $\B_{\nu}$ are linearly independent over $\Z[q,q^{-1}]$ by induction on $\text{ht}(\nu)$. The case of $\text{ht}(\nu)=0$ is trivial. Assume for $\text{ht}(\nu)<n$, our assertion is true. Now, suppose $\text{ht}(\nu)=n$ and we are given a non-trivial linear composition
\begin{equation}\label{Ch}
\sum_{M}c_M\Ch M=0
\end{equation}
for some $M\in \B_{\nu}$ and some $c_M\in\Z[q,q^{-1}]$. Choose an $i\in I$. We show by downward induction on $k=n,\dots,1$ that $c_M=0$ for all $M$ with $\varepsilon_i(M)=k$.

If $k=n$ and $M\in \B_{\nu}$ such that $\varepsilon_i(M)=n$, we must have $\nu=ni$ and $M=V(i^n)$, our assertion is trivial. Assume for $1\leq k<n$, we have $c_M=0$ for all $L$ with $\varepsilon_i (L)>k$. Taking out the terms with $i^k$-tail in the rest of (\ref{Ch}), we obtain
$$\sum_{M:\varepsilon_i(M)=k}c_M\Ch (\Delta_{i^k}M)=0.$$
If $\Delta_{i^k}M\simeq K\otimes V(i^k)$ for a gr-irreducible $K\in R(\nu-ki)$-$\fMod$ with $\varepsilon_i(K)=0$, then $$\Ch (\Delta_{i^k}M)=\gdim V(i^k)\cdot \Ch K\cdot i^k.$$
By the inductive hypothesis and the Corollary \ref{Coo}, we get $c_M=0$ for all $M$ with $\varepsilon_i(M)=k$. Since each gr-irreducible $R(\nu)$-modules $M$ has $\varepsilon_i(M)>0$ for at least one $i\in I$, the theorem has been proved.
\end{proof}
\end{theorem}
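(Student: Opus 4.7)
My plan is to prove, by induction on $\mathrm{ht}(\nu)$, that the characters of the gr-irreducible modules $\{M\}_{M \in \B_\nu}$ are linearly independent over $\Z[q,q^{-1}]$; since $G_0(R(\nu))$ is a free $\Z[q,q^{-1}]$-module on these classes, this yields injectivity of $\Ch$. The base case $\mathrm{ht}(\nu) = 0$ is immediate: $R(0) = \K$ has a unique irreducible and its character is $\emptyset$.

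For the inductive step, assume $\mathrm{ht}(\nu) = n \geq 1$ and suppose we have a relation $\sum_{M \in \B_\nu} c_M \Ch M = 0$ with $c_M \in \Z[q,q^{-1}]$. Fix an $i \in I$ and perform downward induction on $k$, from $k = n$ down to $k = 1$, to show that $c_M = 0$ for every $M$ with $\varepsilon_i(M) = k$. The top case $k = n$ forces $\nu = ni$ and $M = V(i^n)$, and the single term $c_{V(i^n)} i^n$ of the $i^n$-graded component of the identity gives $c_{V(i^n)} = 0$. Inductively, once $c_M = 0$ whenever $\varepsilon_i(M) > k$, I extract the $i^k$-tail component of the character identity. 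Concretely, by Proposition \ref{P1}, each remaining $M$ with $\varepsilon_i(M) = k$ satisfies $\Delta_{i^k} M \simeq K_M \otimes V(i^k)$ for a gr-irreducible $K_M \in R(\nu-ki)\text{-}\fMod$ with $\varepsilon_i(K_M) = 0$, so
\[
\Ch(\Delta_{i^k} M) \;=\; \gdim V(i^k)\cdot \Ch(K_M)\cdot i^k,
\]
and no sequences of the form $\jj\, i^k$ with $j_{\mathrm{last}} \neq i$ can come from terms with $\varepsilon_i(M') > k$ since those are already zero.

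Collecting the coefficients of sequences ending in $i^k$ (and not in a longer $i$-tail, by the $\varepsilon_i(K_M) = 0$ condition) reduces the relation to
\[
\sum_{M:\, \varepsilon_i(M) = k} c_M \gdim V(i^k)\cdot \Ch(K_M) \;=\; 0
\]
inside $\Z[q,q^{-1}]\Seq(\nu - ki)$. By Corollary \ref{Coo}, the map $M \mapsto K_M$ is injective on the set of $M$ with $\varepsilon_i(M) = k$, so the $K_M$ are pairwise non-isomorphic gr-irreducibles of $R(\nu-ki)$. Since $\mathrm{ht}(\nu - ki) < n$, the outer inductive hypothesis gives linear independence of the $\Ch(K_M)$, forcing $c_M = 0$ for all such $M$. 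Iterating this for all $i \in I$ and using that every nonzero module has $\varepsilon_i(M) > 0$ for at least one $i$ (the last letter of some sequence $\kk$ with $1_\kk M \neq 0$), we conclude $c_M = 0$ for every $M \in \B_\nu$.

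The main subtlety is the bookkeeping of the $i$-tail extraction: one must verify that, after clearing coefficients of modules with $\varepsilon_i > k$, the $i^k$-tail part of $\Ch M$ for $\varepsilon_i(M) = k$ is \emph{exactly} $\Ch(\Delta_{i^k} M)$ times $i^k$ with no contamination from longer tails, which is why the condition $\varepsilon_i(K_M) = 0$ in Proposition \ref{P1} is essential. Everything else is a routine transport of the Kang--Khovanov--Lauda argument through Lemmas \ref{L1}, \ref{L2} and Corollary \ref{Coo}, which already subsume the $I^-$ case where $\Ind V(i^n) \otimes V(i^m)$ fails to be irreducible.
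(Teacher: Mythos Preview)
Your proposal is correct and follows essentially the same route as the paper's proof: outer induction on $\mathrm{ht}(\nu)$, inner downward induction on $k=\varepsilon_i(M)$ for a chosen $i$, extraction of the $i^k$-tail using Proposition~\ref{P1}, and the injectivity of $M\mapsto K_M$ via Corollary~\ref{Coo} together with the inductive hypothesis on $R(\nu-ki)$. Your write-up is slightly more explicit than the paper's about why the tail extraction is clean and about iterating over all $i\in I$, but the argument is the same.
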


\vskip 2mm

For each $\nu\in \N[I]$, `$\Ch$' induces an injective map of $\Q(q)$-vector space $\Ch :\Q(q)\otimes_{\Z[q,q^{-1}]}G_0(R(\nu))\hookrightarrow \Q(q)\Seq(\nu)$, which is dual to
$$\Q(q)\Seq(\nu)\longrightarrow U^-_{\nu}\stackrel{\Gamma_{\Q(q)}}{\longrightarrow}K_0(R(\nu))_{\Q(q)}.$$
It follows that $\Gamma_{\Q(q)}$ is surjective. Combine with the injectivity of $\Gamma_{\Q(q)}$, we obtain the following categorification of $U^-$.

\vskip 2mm

\begin{proposition}{\it
$\Gamma_{\Q(q)}:U^-\rightarrow K_0(R)_{\Q(q)}$ is an isomorphism.
}\end{proposition}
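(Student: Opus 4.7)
The plan has two halves. Injectivity of $\Gamma_{\Q(q)}$ has already been assembled at the start of this section: Proposition~\ref{Bi} shows that $\Gamma_{\Q(q)}$ is isometric with respect to the pairings $\{\,,\,\}$ on $U^-$ and $(\,,\,)$ on $K_0(R)_{\Q(q)}$, and $\{\,,\,\}$ is nondegenerate by \cite{SV2001}, so the kernel of $\Gamma_{\Q(q)}$ is trivial. Only surjectivity needs to be addressed.

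For surjectivity I would work one weight $\nu\in\N[I]$ at a time. Consider the $\Q(q)$-linear map
$$\varphi_\nu:\Q(q)\Seq(\nu)\longrightarrow U^-_\nu,\qquad i_1 i_2\cdots i_n\longmapsto f_{i_1}f_{i_2}\cdots f_{i_n},$$
which is surjective because $U^-$ is generated by the $f_i$'s. The composition $\Gamma_{\Q(q)}\circ\varphi_\nu:\Q(q)\Seq(\nu)\to K_0(R(\nu))_{\Q(q)}$ sends a word $\ii$ to $[P_\ii]$, since induction categorifies multiplication in $K_0(R)$. The key structural observation is that this composite is the transpose of the extended character map
$$\Ch:\Q(q)\otimes_{\mathcal A} G_0(R(\nu))\longrightarrow \Q(q)\Seq(\nu)$$
with respect to the perfect pairing $(\,,\,)$ between $K_0(R(\nu))_{\Q(q)}$ and $\Q(q)\otimes_{\mathcal A} G_0(R(\nu))$ recalled in Section~3.2, and the natural pairing of $\Q(q)\Seq(\nu)$ with itself. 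This transpose identity is immediate from $([P_\ii],[M])=\gdim(1_\ii M)$, which in turn comes from the definition of the pairing together with the identification $P_\ii^\psi\otimes_{R(\nu)}M\simeq 1_\ii M$. Since the preceding theorem asserts that $\Ch$ is injective, perfect duality forces $\Gamma_{\Q(q)}\circ\varphi_\nu$ to be surjective, and hence $\Gamma_{\Q(q)}:U^-_\nu\to K_0(R(\nu))_{\Q(q)}$ is surjective for every $\nu$. Summing over $\nu$ finishes the proof.

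The main obstacle is really not a deep calculation but the bookkeeping of the two dualities. One must verify that the pairing between $K_0$ and $G_0$ remains perfect after extension of scalars to $\Q(q)$, which uses algebraic closedness of $\K$ as noted in Section~3.2, and one must check that the transpose of the monomial map $\varphi_\nu$ computed against $\{\,,\,\}$ on $U^-$ really does match $\Ch$ after composition with $\Gamma_{\Q(q)}$, keeping all degree shifts and the twisted multiplication on $U^-\otimes U^-$ consistent with Proposition~\ref{Bi}(iii). Both steps are standard, so once they are set up correctly the surjectivity conclusion drops out of the character injectivity with no further computation.
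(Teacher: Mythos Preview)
Your proposal is correct and follows essentially the same route as the paper: injectivity via the isometry with the nondegenerate form $\{\,,\,\}$, and surjectivity by observing that the composite $\Q(q)\Seq(\nu)\to U^-_\nu\to K_0(R(\nu))_{\Q(q)}$ is dual to the character map $\Ch$, whose injectivity (the preceding theorem) forces the composite---and hence $\Gamma_{\Q(q)}$---to be surjective. You have simply made explicit the duality identity $([P_\ii],[M])=\gdim(1_\ii M)$ and the base-change bookkeeping that the paper leaves implicit.
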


\vskip 2mm

We next consider the surjectivity of $\Gamma:_{\mathcal A}U^-\rightarrow K_0(R)$. The following several results can be proved by the same manner in \cite[Chapter 5]{K2005}.

\vskip 2mm

\begin{lemma}\label{L3}
{\it Let $i\in I$ and  $M\in R(\nu)$-$\fMod$ be a gr-irreducible module. Then for any $0\leq n\leq \varepsilon_i(M)$, the graded socle $\soc(\Delta_{i^n}M)$ is a gr-irreducible $R(\nu-ni)\otimes R(ni)$-module of the form $L\otimes V(i^n)$ with $\varepsilon_i(L)=\varepsilon_i(M)-n$.
}
\begin{proof}
Let $\varepsilon_i(M)=a$ and $\Delta_{i^a}M\simeq K\otimes V(i^a)$ for some gr-irreducible $K\in R(\nu-ai)$-$\fMod$. For each constituent $L\otimes V(i^n)$ of $\soc(\Delta_{i^n}M)$ with $\varepsilon_i(L)=a-n$, we have $$\Res_{\nu-ai,(a-n)i,ni}^{\nu-ni,ni}L\otimes V(i^n)\hookrightarrow \Res_{\nu-ai,(a-n)i,ni}^{\nu-ni,ni} \Delta_{i^n}M.$$
On the other hand, by the transitivity of the $\Res$, we obtain
$$\Res_{\nu-ai,(a-n)i,ni}^{\nu-ni,ni} \Delta_{i^n}M\simeq\Res_{\nu-ai,(a-n)i,ni}^{\nu-ai,ai}\Res_{\nu-ai,ai}^{\nu}M \simeq K\otimes V(i^{a-n})\otimes V(i^{n}).$$
Hence $\soc(\Delta_{i^n}M)$ must equal $L\otimes V(i^n)$.
\end{proof}
\end{lemma}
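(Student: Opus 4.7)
My plan is to exploit the fact that $V(i^n)$ is the unique gr-irreducible $R(ni)$-module (up to degree shift), combined with a refinement of the Mackey-type transitivity of restriction used in Lemma \ref{L2}. First, since every gr-irreducible $R(\nu-ni)\otimes R(ni)$-module factors as $L\otimes V(i^n)\{r\}$, every gr-irreducible submodule of $\Delta_{i^n}M$ has this shape, and by Lemma \ref{L1} any such $L$ satisfies $\varepsilon_i(L)=\varepsilon_i(M)-n$. Because $R(\nu-ni)\otimes R(ni)$ is Laurentian (Proposition \ref{basis}), $\soc(\Delta_{i^n}M)$ is nonzero, so the existence of at least one such constituent is immediate, and the $\varepsilon_i(L)=\varepsilon_i(M)-n$ assertion is already under control.

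The heart of the argument is uniqueness of the isomorphism type and multiplicity one. Set $a=\varepsilon_i(M)$ and invoke Proposition \ref{P1} to write $\Delta_{i^a}M\simeq K\otimes V(i^a)$ with $K$ gr-irreducible and $\varepsilon_i(K)=0$. Using transitivity of restriction together with assertion (i) of the earlier lemma (and its well-known analogue for nil-Hecke algebras when $i\in I^+$), I will identify
\[
\Res^{\nu-ni,ni}_{\nu-ai,(a-n)i,ni}\Delta_{i^n}M \;\simeq\; \Res^{\nu}_{\nu-ai,(a-n)i,ni}M \;\simeq\; K\otimes V(i^{a-n})\otimes V(i^n),
\]
which is gr-irreducible as a triple tensor product of gr-irreducibles.

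Now for any simple summand $L\otimes V(i^n)$ of $\soc(\Delta_{i^n}M)$ with $\varepsilon_i(L)=a-n$, applying $\Res^{\nu-ni}_{\nu-ai,(a-n)i}$ to the first factor and using Proposition \ref{P1} again yields $\Delta_{i^{a-n}}L\simeq K'\otimes V(i^{a-n})$ for some gr-irreducible $K'$, and the embedding
\[
K'\otimes V(i^{a-n})\otimes V(i^n)\;\hookrightarrow\;K\otimes V(i^{a-n})\otimes V(i^n)
\]
forces $K'\simeq K$ (up to shift). Then Proposition \ref{P1} pins down $L\simeq\hd\Ind_{\nu-ai,(a-n)i}K\otimes V(i^{a-n})$, so the isomorphism type of $L$ is uniquely determined.

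For multiplicity one, observe that if $\soc(\Delta_{i^n}M)$ contained two copies of $L\otimes V(i^n)$, the same triple restriction would produce at least two copies of the gr-irreducible $K\otimes V(i^{a-n})\otimes V(i^n)$ inside itself, a contradiction. The main obstacle I anticipate is the bookkeeping of degree shifts in the graded setting and correctly invoking transitivity of $\Res$ across three subscripts; once these are in place the argument closes quickly via Proposition \ref{P1} and Lemma \ref{L1}.
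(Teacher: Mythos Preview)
Your proposal is correct and follows essentially the same route as the paper: both set $a=\varepsilon_i(M)$, invoke Proposition \ref{P1} to get $\Delta_{i^a}M\simeq K\otimes V(i^a)$, and then use transitivity of restriction to identify $\Res^{\nu-ni,ni}_{\nu-ai,(a-n)i,ni}\Delta_{i^n}M$ with the gr-irreducible module $K\otimes V(i^{a-n})\otimes V(i^n)$, forcing the socle to be a single copy of some $L\otimes V(i^n)$. Your write-up is more explicit than the paper's (you separately pin down the isomorphism type of $L$ via $K'\simeq K$ and then argue multiplicity one), whereas the paper compresses both into the observation that a direct sum of nonzero restrictions cannot embed in an irreducible module; but the underlying argument is the same.
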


\vskip 2mm
Define the functor $e_i=\Res_{\nu-i}^{\nu-i,i}\circ \Delta_i:R(\nu)\text{-}\fMod\rightarrow R(\nu-i)\text{-}\fMod$. Then for $M\in R(\nu)\text{-}\fMod$, $\varepsilon_i(M)=\text{max}\{n\geq 0\mid e_i^n M\neq 0\}$.

\vskip 2mm

\begin{lemma}
{\it Let $i\in I$ and  $M\in R(\nu)$-$\fMod$ be a gr-irreducible module with $\varepsilon_i(M)>0$. Then $\soc(e_iM)$ is a gr-irreducible $R(\nu-i)$-module with $\varepsilon_i(\soc (e_iM))=\varepsilon_i(M)-1$.
}
\begin{proof}
Let $L$ be a gr-irreducible submodule of $e_iM$. Since $e_iM=\bigoplus_{\jj\in\Seq(\nu-i)}1_\jj\otimes 1_iM$, we have $(1_{\nu-i}\otimes x_i^l)e_iM=0$ for $l\gg 0$. By Schur's Lemma and Proposition \ref{Cen}, $z=\sum_{\ii\in\Seq(\nu),1\leq k\leq m}x_{k,\ii}$ ($m=\text{ht}(\nu)$) acts on $M$ by a scalar. Similarly, $z'=\sum_{\ii'\in\Seq(\nu-i),1\leq k\leq m-1}x_{k,\ii'}$ acts on $L$ by scalar and so $z-z'$ acts on $L$ by a scalar $c$. Since $L\subseteq 1_{\nu-i}\otimes 1_iM$, for every $m\in L$, we get
$$(z-z')m=(\sum_{\jj\in\Seq(\nu-i)}1_\jj\otimes x_i)m=1_{\nu-i}\otimes x_im=cm.$$
Now $(1_{\nu-i}\otimes x_i^l)m=0$ for $l\gg 0$ yields $c=0$, and so $(1_{\nu-i}\otimes x_i)L=0$. Hence $L$ is a gr-irreducible $R(\nu-i)\otimes R(i)$-submodule of $\Delta_iM$, which is isomorphic to $L\otimes V(i)$. By Lemma \ref{L3}, $\soc(\Delta_iM)$ is gr-irreducible. It follows that $\soc(e_iM)=L$ is gr-irreducible.
\end{proof}
\end{lemma}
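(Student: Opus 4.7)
The plan is to reduce the claim to the $n=1$ case of Lemma \ref{L3}. The heart of the argument is to show that any gr-irreducible $R(\nu-i)$-submodule $L\subseteq e_iM$ actually extends to an $R(\nu-i)\otimes R(i)$-submodule of $\Delta_iM$ isomorphic to $L\otimes V(i)$. Once this is established, $L\otimes V(i)\hookrightarrow\soc(\Delta_iM)$; but by Lemma \ref{L3} applied with $n=1$, $\soc(\Delta_iM)$ is a single gr-irreducible $R(\nu-i)\otimes R(i)$-module of the form $L'\otimes V(i)$ with $\varepsilon_i(L')=\varepsilon_i(M)-1$. Therefore $L=L'$, every gr-irreducible submodule of $e_iM$ equals the fixed $L'$, and $\soc(e_iM)=L'$ is gr-irreducible with the required $\varepsilon_i$-value.

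To execute the reduction, note that $e_iM=\Res^{\nu-i,i}_{\nu-i}\Delta_iM$ is literally $1_{\nu-i}\otimes 1_iM$ with the $R(i)$-action forgotten, so the task is precisely to show that the generator $1_{\nu-i}\otimes x_i$ of $R(i)$ annihilates $L$. By Proposition \ref{Cen}, the sum $z=\sum_{\ii\in\Seq(\nu)}\sum_{k:\,i_k=i}x_{k,\ii}$ of all dots of color $i$ lies in the center of $R(\nu)$, and the analogous element $z'$ is central in $R(\nu-i)$. Since $\K$ is algebraically closed and $M,L$ are finite-dimensional and gr-irreducible (hence irreducible after forgetting the grading), Schur's lemma gives scalars $c,c'\in\K$ with $z$ acting on $M$ as $c$ and $z'$ acting on $L$ as $c'$. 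On the subspace $1_{\nu-i}\otimes 1_iM$, the identity $z=z'+(1_{\nu-i}\otimes x_i)$ holds by peeling off the terminal $i$-strand, so $1_{\nu-i}\otimes x_i$ acts on $L$ as the scalar $c-c'$. Finite-dimensionality of $M$ combined with the positive degree $2r_i$ of $x_i$ forces $x_i$ to be nilpotent on $\Delta_iM$, hence $c=c'$ and $(1_{\nu-i}\otimes x_i)L=0$, completing the reduction.

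The principal obstacle is precisely this annihilation step: because $L$ is captured only as an $R(\nu-i)$-submodule of $e_iM$, with no a priori control over how $1_{\nu-i}\otimes x_i$ acts on the embedded copy inside $\Delta_iM$, the central-element identity together with the nilpotency of $x_i$ is what rules out a nontrivial $x_i$-action and permits the lift to $L\otimes V(i)$. Everything after that is a routine invocation of Lemma \ref{L3}.
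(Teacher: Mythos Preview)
Your proof is correct and follows essentially the same route as the paper: show that any gr-irreducible $R(\nu-i)$-submodule $L$ of $e_iM$ is annihilated by $1_{\nu-i}\otimes x_i$ (via a central-element/Schur's-lemma argument together with nilpotency of $x_i$ on a finite-dimensional module), so that $L$ upgrades to $L\otimes V(i)\subseteq\Delta_iM$, and then invoke Lemma~\ref{L3} with $n=1$. The only cosmetic difference is that the paper uses the central element $z=\sum_{\ii\in\Seq(\nu)}\sum_{1\le k\le m}x_{k,\ii}$ (sum of \emph{all} dots), whereas you use the sum of dots of color $i$; both are central by Proposition~\ref{Cen} and both satisfy the same peel-off identity $z\cdot 1_{\nu-i,i}=(z'\otimes 1_i)+(1_{\nu-i}\otimes x_i)$, so the argument is unaffected.
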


\vskip 2mm

Let $i\in I$. For a gr-irreducible $M\in R(\nu)$-$\fMod$, define $\widetilde{e}_iM=\soc (e_iM)$. If $\varepsilon_i(M)>0$, $\widetilde{e}_iM$ is gr-irreducible with $\varepsilon_i(\widetilde{e}_iM)=\varepsilon_i(M)-1$.

\vskip 2mm

\begin{proposition}\label{P2}
{\it For a gr-irreducible $M\in R(\nu)$-$\fMod$ and $n\geq 0$, we have $$\soc(\Delta_{i^n}M)\simeq \widetilde{e}_{i}^nM\otimes V(i^n)\{r\}.$$
for some $r\in \Z$.}
\begin{proof}
The case of $i\in I^+$ has been proved in \cite[Lemma 3.13]{KL2009}. Assume $i\in I^-$, since $\widetilde{e}_{i}M\otimes V(i)$ is a graded submodule of $\Delta_iM$, we see that $\widetilde{e}_{i}^nM\otimes V(i)^{\otimes n}$ is a graded submodule of $\Res_{\nu-ni,i,\dots,i}^{\nu-ni,ni}\Delta_{i^n}M$. By the following Frobenius reciprocity
$$\HOM(\Ind_{\nu-ni,i,\dots,i}^{\nu-ni,ni}\widetilde{e}_{i}^nM\otimes V(i)^{\otimes n}, \Delta_{i^n}M)\simeq \HOM(\widetilde{e}_{i}^nM\otimes V(i)^{\otimes n}, \Res_{\nu-ni,i,\dots,i}^{\nu-ni,ni}\Delta_{i^n}M),$$
we have a nonzero homomorphism from $\widetilde{e}_{i}^nM\otimes \Ind_{i,\dots,i}^{ni}V(i)^{\otimes n}$ to $\Delta_{i^n}M$. The composition factors of $\widetilde{e}_{i}^nM\otimes \Ind_{i,\dots,i}^{ni}V(i)^{\otimes n}$ can only be $\widetilde{e}_{i}^nM\otimes V(i^n)$, up to degree shifts. So we obtain $\widetilde{e}_{i}^nM\otimes V(i^n)\{r\}\hookrightarrow \Delta_{i^n}M$ for some $r\in \Z$. Now our assertion follows from Lemma \ref{L3}.
\end{proof}
\end{proposition}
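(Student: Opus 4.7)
The plan is to reduce to the case $i \in I^-$, since the $I^+$ case is already cited from \cite[Lemma 3.13]{KL2009}. The overall strategy is crystal-theoretic: construct an explicit submodule of $\Delta_{i^n}M$ of the predicted form and then invoke the uniqueness of the socle established in Lemma \ref{L3}.

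The first step is to produce the embedding $\widetilde{e}_i^{\,n}M \otimes V(i)^{\otimes n} \hookrightarrow \Res^{\nu-ni,ni}_{\nu-ni,i,\dots,i}\Delta_{i^n}M$ of $R(\nu-ni)\otimes R(i)^{\otimes n}$-modules. Iterating the definition $\widetilde{e}_i(-)=\soc(e_i(-))$, the module $\widetilde{e}_i^{\,n}M$ sits as a gr-irreducible $R(\nu-ni)$-submodule of $e_i^{\,n}M$, and $e_i^{\,n}M$ is exactly $\Res^{\nu-ni,ni}_{\nu-ni,i,\dots,i}\Delta_{i^n}M$ with the $R(i)^{\otimes n}$-action forgotten. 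Exactly as in the proof of the preceding lemma (the $c=0$ argument using Schur's Lemma and Proposition \ref{Cen}), each of the $n$ generators $x_i$ from the $R(i)$ factors acts nilpotently and hence by zero on the socle, so $\widetilde{e}_i^{\,n}M$ promotes to an $R(\nu-ni)\otimes R(i)^{\otimes n}$-submodule on which each $R(i)$-factor acts trivially; this is precisely the outer tensor product with $V(i)^{\otimes n}$.

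The second step is Frobenius reciprocity. The embedding above corresponds under the adjunction $(\Ind,\Res)$ to a nonzero graded homomorphism
\begin{equation*}
\Phi : \widetilde{e}_i^{\,n}M \otimes \Ind^{ni}_{i,\dots,i} V(i)^{\otimes n} \longrightarrow \Delta_{i^n}M
\end{equation*}
of $R(\nu-ni)\otimes R(ni)$-modules. Because $V(i^n)$ is the \emph{unique} gr-irreducible $R(ni)$-module up to degree shift (the key input where $i\in I^-$ is used), every composition factor of $\Ind^{ni}_{i,\dots,i}V(i)^{\otimes n}$ is a shift of $V(i^n)$; combined with the fact that $\widetilde{e}_i^{\,n}M$ is gr-irreducible and $\K$ is algebraically closed, every composition factor of the source of $\Phi$ is isomorphic to $\widetilde{e}_i^{\,n}M\otimes V(i^n)\{s\}$ for some $s\in\Z$. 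In particular, the image of $\Phi$ contains some nonzero copy of $\widetilde{e}_i^{\,n}M\otimes V(i^n)\{r\}$, which therefore embeds into $\Delta_{i^n}M$.

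The final step is to apply Lemma \ref{L3}: $\soc(\Delta_{i^n}M)$ is gr-irreducible of the form $L\otimes V(i^n)$. Since $\widetilde{e}_i^{\,n}M\otimes V(i^n)\{r\}$ is a gr-irreducible submodule of $\Delta_{i^n}M$, it must be the whole socle, forcing $L\simeq \widetilde{e}_i^{\,n}M\{r\}$ and completing the proof. The only delicate point I anticipate is the verification in step one that the $R(i)$-generators kill the socle in the iterated situation; the rest is a formal adjunction and composition-factor count. That delicate point is nevertheless a direct copy of the scalar-$c=0$ argument already carried out in the proof of the previous lemma, applied one $R(i)$-factor at a time.
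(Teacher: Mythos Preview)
Your proposal is correct and follows essentially the same route as the paper's proof: embed $\widetilde{e}_i^{\,n}M\otimes V(i)^{\otimes n}$ into the iterated restriction of $\Delta_{i^n}M$, apply Frobenius reciprocity to get a nonzero map from $\widetilde{e}_i^{\,n}M\otimes \Ind^{ni}_{i,\dots,i}V(i)^{\otimes n}$, use uniqueness of $V(i^n)$ to identify the composition factors, and finish with Lemma~\ref{L3}. The only cosmetic difference is in step one: the paper simply invokes the conclusion of the preceding lemma (that $\widetilde{e}_iN\otimes V(i)\subseteq \Delta_iN$ for gr-irreducible $N$) and iterates it, whereas you rerun the $c=0$ scalar argument one $R(i)$-factor at a time; these are the same verification phrased two ways.
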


\vskip 2mm
\begin{lemma}\label{L4}{\it
Let $i\in I$ and $M\in R(\nu)$-$\fMod$ be a gr-irreducible module with $\varepsilon_i(M)=n$. We have  $M\simeq \hd\Ind_{\nu-ni,ni}\widetilde{e}_{i}^nM\otimes V(i^n)$,
up to a degree shift.
}\end{lemma}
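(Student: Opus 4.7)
The plan is to combine Proposition \ref{P1} and Proposition \ref{P2} directly, since Lemma \ref{L4} is essentially the statement obtained by identifying the module $K$ appearing in Proposition \ref{P1} with $\widetilde{e}_i^n M$ up to a grading shift.

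First, I would invoke Proposition \ref{P1}: since $\varepsilon_i(M) = n$, there is a gr-irreducible $K \in R(\nu - ni)$-$\fMod$ with $\varepsilon_i(K) = 0$ such that
$$\Delta_{i^n} M \simeq K \otimes V(i^n), \qquad M \simeq \hd \Ind_{\nu-ni,ni} K \otimes V(i^n).$$
Because $V(i^n)$ is the unique gr-irreducible $R(ni)$-module (up to degree shift), the outer tensor product $K \otimes V(i^n)$ is itself a gr-irreducible $R(\nu-ni) \otimes R(ni)$-module. Consequently its graded socle is all of $K \otimes V(i^n)$, i.e.
$$\soc(\Delta_{i^n} M) \simeq K \otimes V(i^n).$$

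Next, I would apply Proposition \ref{P2} to the same module $M$, which gives
$$\soc(\Delta_{i^n} M) \simeq \widetilde{e}_i^n M \otimes V(i^n)\{r\}$$
for some $r \in \Z$. Comparing these two descriptions of $\soc(\Delta_{i^n} M)$ yields $K \otimes V(i^n) \simeq \widetilde{e}_i^n M \otimes V(i^n)\{r\}$, and since $V(i^n)$ is one-dimensional this forces $K \simeq \widetilde{e}_i^n M$ up to a degree shift.

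Substituting this identification into the second isomorphism from Proposition \ref{P1} gives $M \simeq \hd \Ind_{\nu-ni,ni} \widetilde{e}_i^n M \otimes V(i^n)$, up to a degree shift, which is the claim. There is no real obstacle here beyond bookkeeping of grading shifts; the genuine work has already been done in Propositions \ref{P1} and \ref{P2}, and the lemma is essentially their conjunction.
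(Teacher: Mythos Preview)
Your approach is exactly the paper's: the proof there reads in its entirety ``The lemma follows from Proposition \ref{P1} and \ref{P2},'' and you have spelled out the intended argument correctly. One small correction: $V(i^n)$ is one-dimensional only for $i\in I^-$; for $i\in I^+$ it is the nil-Hecke irreducible of dimension $n!$. Your conclusion $K\simeq \widetilde{e}_i^n M$ (up to shift) is still valid, but the justification should be that over the algebraically closed field $\K$ an irreducible outer tensor product $K\otimes V$ determines its factors up to isomorphism, not that $V(i^n)$ is one-dimensional.
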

\begin{proof}
The lemma follows from Proposition \ref{P1} and \ref{P2}.
\end{proof}
\vskip 2mm

Assume $|I|=k$. The elements in $I$ is labelled by $i_0,\dots, i_p,i_{p+1},\dots, i_{k-1}$,
such that $i_0,\dots , i_p\in I^-$ and $i_{p+1},\dots , i_{k-1}\in I^+$. For $r\geq k$, define $i_r=i_{r'}$ where $r'$ is the residue of $r$ modulo $k$. For $b\in \B_{\nu}$, assign the sequence $W_{b}=c_0c_1\dots$ of nonnegative integers:
$c_0=\varepsilon_{i_0}(S_b)$, and let $M_1=\widetilde{e}_{i_{0}}^{c_0}(S_b)$. Inductively, $c_r=\varepsilon_{i_r}(M_r)$ and $M_{r+1}=\widetilde{e}_{i_r}^{c_r}(M_r)$. We have $c_0+c_1+\cdots=\text{ht}(\nu)$ and only finitely many terms in the sequence are nonzero. Note that if $b\neq b'$, then $W_b\neq W_{b'}$ by Lemma \ref{L4}.

Introduce a lexicographic order on sequences of nonnegative integers: $c_0c_1\dots>d_0d_1\dots$ if for some $t$, $c_0=d_0, c_1=d_1,\dots,c_{t-1}=d_{t-1}$ and $c_t>d_t$. We set $b>b'$ in $\B_{\nu}$ if and only if $W_b>W_{b'}$. To each $b\in \B_{\nu}$, assume $W_{b}=c_0c_1\dots$, assign the projective $R(\nu)$-module $P_{{W_b}^{\bullet}}$ associated to the  sequence
$W_b^{\bullet}=\cdots i_k^{c_k}i_{k-1}^{(c_{k-1})}\cdots i_{p+1}^{(c_{p+1})}i_p^{c_p}\cdots i_0^{c_0}$.

\vskip 2mm

\begin{proposition}
{\it $\HOM(P_{{W_b}^{\bullet}}, S_{b'})=0$ if $b>b'$ and $\HOM(P_{{W_b}^{\bullet}}, S_b)\simeq \K$.
}
\begin{proof}
For $i\in I^+$,  we have $\HOM(P_{i^{(n)}},V(i^n))\simeq \K$ since $P_{i^{(n)}}$ is the graded projective cover of $V(i^n)$. For $i\in I^-$, $\HOM(R_{ni},V(i^n))\simeq V(i^n)\simeq \K$ as graded vector spaces. The results follow immediately from the Frobenius reciprocity and Proposition \ref{P2}.
\end{proof}
\end{proposition}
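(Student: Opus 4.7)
The plan is to prove both statements simultaneously by induction on $\mathrm{ht}(\nu)$, iteratively peeling off the rightmost letter of $W_b^\bullet$ via Frobenius reciprocity (equation~(\ref{Frob})) and using Proposition~\ref{P1} to identify the resulting $\Delta$-module. The base case $\mathrm{ht}(\nu)=0$ is trivial since $R(0)=\K$.

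For the inductive step, the rightmost factor of $W_b^\bullet$ is $i_0^{c_0}$ with $i_0\in I^-$, where $c_0=\varepsilon_{i_0}(S_b)$. Writing $P_{W_b^\bullet}\simeq \Ind_{\nu-c_0 i_0,\,c_0 i_0}(P_{W_b^\bullet\setminus\text{last}}\boxtimes R(c_0 i_0))$ and combining Frobenius reciprocity with the identification $\HOM_{R(c_0 i_0)}(R(c_0 i_0),N)\simeq N$, we obtain
$$\HOM_{R(\nu)}(P_{W_b^\bullet},S_{b'})\simeq \HOM_{R(\nu-c_0 i_0)}(P_{W_b^\bullet\setminus\text{last}},\,\Delta_{i_0^{c_0}}S_{b'}).$$
Setting $c_0'=\varepsilon_{i_0}(S_{b'})$, the condition $b\ge b'$ forces $c_0\ge c_0'$. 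If $c_0>c_0'$, then $\Delta_{i_0^{c_0}}S_{b'}=0$ and the Hom vanishes; if $c_0=c_0'$, Proposition~\ref{P1} gives $\Delta_{i_0^{c_0}}S_{b'}\simeq M_1(b')\otimes V(i_0^{c_0})$, which as an $R(\nu-c_0 i_0)$-module is simply $M_1(b')\{d_0\}$ since $V(i_0^{c_0})$ is one-dimensional.

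For subsequent layers with $i_r\in I^+$, use $P_{i_r^{(c_r)}}=R(c_r i_r)e_{i_r,c_r}$ together with $\HOM_{R(c_r i_r)}(R(c_r i_r)e_{i_r,c_r},N)\simeq e_{i_r,c_r}N$. Setting $c_r'=\varepsilon_{i_r}(M_r(b'))$, the case $c_r>c_r'$ again kills the Hom; the case $c_r=c_r'$ yields $\Delta_{i_r^{c_r}}M_r(b')\simeq M_{r+1}(b')\otimes V(i_r^{c_r})$ by Proposition~\ref{P1}, and the key nil-Hecke fact $e_{i_r,c_r}V(i_r^{c_r})\simeq\K$ (which holds because $P_{i_r^{(c_r)}}$ is the graded projective cover of $V(i_r^{c_r})$) reduces the Hom to $\HOM(\cdot,M_{r+1}(b')\{d_r\})$. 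An analogous step applies when $i_r\in I^-$ via $\HOM_{R(c_r i_r)}(R(c_r i_r),V(i_r^{c_r}))\simeq\K$.

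Iterating through the layers: if $b>b'$, there is a first index $t$ with $c_t>c_t'$, at which stage the Hom is forced to $0$; if $b=b'$, all $c_r=c_r'$, every reduction succeeds, and we finish with $\HOM_{R(0)}(\K,\K\{D\})\simeq\K$ after accumulating a total degree shift $D$. The main technical obstacle is handling the $i_r\in I^+$ layers through the nil-Hecke idempotent $e_{i_r,c_r}$ and confirming that $e_{i_r,c_r}V(i_r^{c_r})$ is one-dimensional; a secondary but important point is that after each peel, the lexicographic comparison of the truncated tails of $W_b$ and $W_{b'}$ is preserved, so the inductive hypothesis on the smaller module $M_{r+1}(b')\in R(\nu-c_0 i_0-\cdots-c_r i_r)\text{-}\fMod$ legitimately applies at every stage.
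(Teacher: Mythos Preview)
Your proof is correct and follows essentially the same route as the paper: iterated Frobenius reciprocity to peel off the rightmost block of $W_b^\bullet$, combined with the one-dimensionality of $\HOM_{R(ni)}(P_{i^{(n)}},V(i^n))$ in the real case and of $\HOM_{R(ni)}(R(ni),V(i^n))$ in the imaginary case. The only imprecision is that when $c_r=c_r'$ you cite Proposition~\ref{P1} to obtain $\Delta_{i_r^{c_r}}M_r(b')\simeq M_{r+1}(b')\otimes V(i_r^{c_r})$, but Proposition~\ref{P1} by itself only yields $K\otimes V(i_r^{c_r})$ for \emph{some} gr-irreducible $K$; to identify $K$ with $\widetilde{e}_{i_r}^{\,c_r}M_r(b')=M_{r+1}(b')$ (up to shift) you need Proposition~\ref{P2}, which is precisely what the paper invokes.
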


\vskip 2mm

By proposition above, each $[P]\in K_0(R(\nu))$ can be written as a $\Z[q,q^{-1}]$-linear combination of $[P_{{W_b}^{\bullet}}]$ for $b\in \B_{\nu}$. Therefore, $\Gamma$ is surjective. We obtain
\vskip 2mm
\begin{theorem}{\it
 $\Gamma:_{\mathcal A}U^-\rightarrow K_0(R)$ is an isomorphism.
}\end{theorem}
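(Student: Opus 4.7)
The plan is to combine the already established injectivity of $\Gamma$ with a triangularity argument for surjectivity based on the preceding proposition. Injectivity was recorded earlier: $\Gamma_{\Q(q)}$ is injective by non-degeneracy of $\{\,,\,\}$, and $\Gamma$ is its restriction to ${_{\mathcal A}}U^- \hookrightarrow U^-$. Thus only surjectivity remains, and it suffices to show that every class $[P_b]$ with $b \in \B_\nu$ lies in the image of $\Gamma$, since these form an $\mathcal A$-basis of $K_0(R(\nu))$.

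The first step is to verify that each class $[P_{W_b^\bullet}]$ lies in $\Gamma({_{\mathcal A}}U^-)$. By construction $W_b^\bullet$ is a sequence built from divided powers $i^{(c)}$ precisely for real indices $i \in I^+$ and ordinary powers $i^c$ precisely for imaginary indices $i \in I^-$. Since multiplication in $K_0(R)$ is given by induction and $\Ind_{\ii,\jj} P_\ii \otimes P_\jj \simeq P_{\ii\jj}$, the class $[P_{W_b^\bullet}]$ factors, up to a monomial in $q$ absorbed into the shift $\{-\langle \ii\rangle\}$ built into $_\ii P$, as a product of $[P_{i^{(n)}}] = \Gamma(f_i^{(n)})$ for $i \in I^+$ and $[P_i]^n = \Gamma(f_i^n)$ for $i \in I^-$. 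These are by definition the $\mathcal A$-algebra generators of ${_{\mathcal A}}U^-$.

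The second step is to translate the preceding proposition into a triangular change-of-basis statement in $K_0(R(\nu))$. Expanding $[P_{W_b^\bullet}] = \sum_{b'} c_{b,b'} [P_{b'}]$ with $c_{b,b'} \in \mathcal A$ and pairing against the dual basis $\{[S_{b''}]\}$ of $G_0(R(\nu))$ under $([P_{b'}], [S_{b''}]) = \delta_{b',b''}$ identifies $c_{b,b''}$ with $\gdim \HOM(P_{W_b^\bullet}, S_{b''})$ (up to the anti-involution $\psi$). The preceding proposition then forces $c_{b,b'} = 0$ whenever $b > b'$ and $c_{b,b} \in q^{\Z}$. Ordering $\B_\nu$ by $>$, the transition matrix from $\{[P_{W_b^\bullet}]\}_{b \in \B_\nu}$ to $\{[P_b]\}_{b \in \B_\nu}$ is upper-triangular with units on the diagonal, hence invertible over $\mathcal A$. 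Inverting it expresses every $[P_b]$ as an $\mathcal A$-linear combination of classes lying in $\Gamma({_{\mathcal A}}U^-)$, and surjectivity follows.

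The main technical point requiring care is the bookkeeping of degree shifts: one has to check that the diagonal entries of the transition matrix are genuinely units in $\mathcal A = \Z[q,q^{-1}]$ rather than merely nonzero rational functions. This is precisely the content of the statement $\HOM(P_{W_b^\bullet}, S_b) \simeq \K$ in the preceding proposition, which forces the graded dimension to be a single $q^r$; together with the normalizing shift $\{-\langle \ii\rangle\}$ fixed in the definition of $_\ii P$, it ensures that divided powers on the ${_{\mathcal A}}U^-$ side and idempotent projectives on the $K_0(R)$ side match $\mathcal A$-integrally.
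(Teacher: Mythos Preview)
Your proposal is correct and follows essentially the same route as the paper: injectivity is inherited from $\Gamma_{\Q(q)}$, and surjectivity is obtained by using the preceding proposition to show that the transition matrix from $\{[P_{W_b^\bullet}]\}_{b\in\B_\nu}$ to $\{[P_b]\}_{b\in\B_\nu}$ is unitriangular over $\mathcal A$, so that every $[P_b]$ is an $\mathcal A$-combination of the classes $[P_{W_b^\bullet}]$, which visibly lie in the image of $\Gamma$. One small remark: you do not need to route the coefficient identification through the bilinear pairing $(\,,\,)$ and the involution $\psi$; since $P_{b'}$ is the projective cover of $S_{b'}$, the multiplicity $c_{b,b'}$ of $[P_{b'}]$ in $[P_{W_b^\bullet}]$ is directly $\gdim\HOM(P_{W_b^\bullet},S_{b'})$, which is exactly what the preceding proposition computes.
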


\vskip 2mm
For $M\in R(\nu)$-$\fMod$, let $M^\divideontimes=\HOM_\K(M,\K)^\psi$ be the dual module in $R(\nu)$-$\fMod$ with the action given by
$$(zf)(m):=f(\psi(z)m)\ \text{for}\ z\in R(\nu), f\in \HOM_\K(M,\K), m\in M.$$
 As proved in \cite[Section 3.2]{KL2009}, for each gr-irreducible $R(\nu)$-module $S$, there is a unique $r\in\Z$ such that $(L\{ r \})^\divideontimes \simeq L\{r\}$, and the graded projective cover of $L\{ r \}$ is stable under the bar-involution $^-$.

\vskip 6mm

\bibliographystyle{amsplain}

%%%%%%%%
\end{document}